\newtheorem{theorem}{Theorem}[section]
\newtheorem{corollary}[theorem]{Corollary}
\newtheorem{lemma}[theorem]{Lemma}
\newtheorem{remark}[theorem]{Remark}
\newtheorem{conjecture}[theorem]{Conjecture}
\newtheorem{definition}[theorem]{Definition}
\DeclareMathOperator*{\argmin}{arg\,min} %
\DeclareMathOperator*{\argmax}{arg\,max} %
\newcommand{\overbar}[1]{\overline{#1}}
\DeclarePairedDelimiterXPP\expect[1]{\operatorname{\mathbb{E}}}{[}{]}{}{\ifblank{#1}{\:\cdot\:}{#1}}
\DeclarePairedDelimiterXPP\variance[1]{\operatorname{Var}}{(}{)}{}{\ifblank{#1}{\:\cdot\:}{#1}}
\DeclarePairedDelimiter\abs{\lvert}{\rvert}
\DeclarePairedDelimiterXPP\covariance[1]{\operatorname{Cov}}{(}{)}{}{\ifblank{#1}{\:\cdot\:}{#1}}
\DeclarePairedDelimiterXPP\average[2]{}{\langle}{\rangle}{\ifblank{#2}{}{_{#2}}}{\ifblank{#1}{\:\cdot\:}{#1}}
\DeclarePairedDelimiterXPP\proba[1]{\operatorname{P}}{(}{)}{}{\ifblank{#1}{\:\cdot\:}{#1}}
\title{On some features of Quadratic Unconstrained Binary Optimization with random coefficients}
\author[1]{Marco Isopi} 
\author[2]{Benedetto Scoppola}
\author[3]{Alessio Troiani}
\affil[1]{Dipartimento di Matematica, Sapienza Università di Roma, Piazzale Aldo Moro 5, 00185 Roma, Italy}
\affil[2]{Dipartimento di Matematica, Universita di Roma ``Tor Vergata'', Via della Ricerca Scientifica 1, 00133 Roma, Italy}
\affil[3]{Dipartimento di Matematica e Informatica, Università degli Studi di Perugia, via Vanvitelli, 1 06123 Perugia, Italy}
\date{\today}
\begin{document}

\maketitle

\begin{abstract}
    Quadratic Unconstrained Binary Optimization (QUBO or UBQP) is concerned with 
    maximizing/minimizing the quadratic form
    $H(J, \eta) = W \sum_{i,j}  J_{i,j} \eta_{i} \eta_{j}$ with $J$ a matrix of coefficients,
    $\eta \in \{0, 1\}^N$ and $W$ a normalizing constant.
    In the statistical mechanics literature, QUBO is a lattice gas counterpart to the (generalized) Sherrington--Kirkpatrick spin glass model.
    Finding the optima of $H$ is an NP-hard problem. 
    Several problems in combinatorial optimization and data analysis can be mapped to QUBO in a straightforward manner.    
    In the combinatorial optimization literature, random instances of QUBO are often used to test the effectiveness of heuristic algorithms.
    
    Here we consider QUBO with random independent coefficients and show that if
    the $J_{i,j}$'s have zero mean and finite variance then, after proper normalization, 
    the minimum and
    maximum \emph{per particle} of $H$ do not depend on the details of the distribution
    of the couplings and are concentrated around their expected values.
    Further, with the help of numerical simulations, we study the 
    minimum and maximum of the objective function and provide some insight
    into the structure of the minimizer and the maximizer of $H$. 
    We argue that also this structure is rather robust.
    Our findings hold also in the diluted case where each of the $J_{i,j}$'s is allowed
    to be zero with probability going to $1$ as $N \to \infty$ in a suitable way.

    \vspace{1em}
    \noindent
    \textbf{Keywords}: QUBO, UBQP, Probabilistic Cellular Automata, Spin Glasses, Lattice Gas,
    Combinatorial optimization
 \end{abstract}

\section{Introduction}\label{sec:introduction}

We consider the quadratic form
\begin{equation}
\label{eq:general_quadratic_form}
	H(J, \eta) = W \sum_{\mathclap{\substack{i,j\\ 1 \leq i,j \leq N}}}  J_{i,j} \eta_{i} \eta_{j}
\end{equation}
where $J \in \mathbb{R}^{N \times N}$ is a matrix of coefficients, $\eta \in
\mathcal{A}^N$ (with $\mathcal{A}$ a finite subset of $\mathbb{R}$), $\eta_{i}$ is the value of
the $i$-th component of $\eta$, and $W \in \mathbb{R}$ is a suitable
normalizing constant. This quadratic form plays an important role both in
combinatorial optimization and statistical mechanics.

In statistical mechanics \eqref{eq:general_quadratic_form}
is the Hamiltonian of several physical systems whose nature
depends on the elements of $\mathcal{A}$.
For instance, if $\mathcal{A} = \{-1, 1\}$ the Hamiltonian 
describes a system of (pairwise) interacting spins whereas
if $\mathcal{A} = \{0, 1\}$ it is generally used to describe
a system of (pairwise) interacting particles.
Spins or particles live on the vertices of some,
possibly oriented, graph $\mathcal{G} = \{V, E\}$, 
called the interaction graph,
with $|V| = N$ and $J$ the weighted adjacency matrix
of $\mathcal{G}$. For each $(i,j) \in E$ the entry $J_{i,j}$
represents the strength of the interaction
between the entities (spins or particles) at vertices $i$
and $j$ of $\mathcal{G}$. The microscopic state of the physical system is given by $\eta$. 

The physical system is described in terms 
of the probability of its microscopic states (Gibbs measure): 
\begin{equation}\label{eq:gibbs_measure}
    \mu(\eta) = \frac{e^{-\beta H(J, \eta)}}{Z_{\beta}}
\end{equation}
where the parameter $\beta$ is called the \emph{inverse temperature} and
$Z_{\beta}$ is a normalizing constant called partition function.

In combinatorial optimization the problem of minimizing (or
maximizing) \eqref{eq:general_quadratic_form} when
$\mathcal{A} = \{0, 1\}$ and $W = 1$ is known under the
names Quadratic Unconstrained Binary Optimization (QUBO) or
Unconstrained Binary Quadratic Programming (UBQP). 
QUBO is NP-hard and, in general, no polynomial time algorithm is known
to find a minimizer of $H$.
Many problems in combinatorial optimization and
data analysis can be mapped to QUBO in a straightforward
manner (see \cite{glover2019quantum} for a survey). 
Even constrained optimization problems can be reduced to QUBO
by introducing quadratic penalties in the objective
function.

Minimizers (\emph{ground states}) of forms like $H$
in \eqref{eq:general_quadratic_form}
are of relevance in the context of
statistical mechanics as well. Indeed 
ground states are the ones with the highest
probability with respect to the probability measure
\eqref{eq:gibbs_measure}. As the temperature of the physical
system approaches zero ($\beta \to \infty$), the system
will eventually reach its ground state.

If the signs of the entries of $J$ are \emph{disordered}, i.e. if it is not possible to find the global minimizer with a local criterion, finding
the ground states of the system is non-trivial.

In the context of statistical mechanics, there is a vast
literature concerning the properties of the ground states
of $H$ of the form \eqref{eq:general_quadratic_form}. Here we recall the comprehensive texts
by Parisi, Mezard and Virasoro \cite{mezard1987spin} (for a more physical perspective) and more recently by Talagrand \cite{talagrand2010mean, talagrand2010mean2} and Panchenko \cite{panchenko2012sherrington}. The recent book \cite{charbonneau2023spin} provides an up to date overview. 

The system that attracted more effort from physicists, and that is most described in the references above, is the so called
Sherrington--Kirkpatrick (SK) model, in which $\eta \in \{-1, 1\}^{N}$,
$W = \frac{1}{\sqrt{N}}$ and $J_{i,j}$ are independent
standard Gaussian random variables. In relatively more recent times, always in the context of spin systems, the universality of the features of the SK model with respect to the distribution of the (independent) $J_{ij}$ has been proven, see \cite{carmona2006universality, chatterjee2005simple}.

The $\{0, 1\}$ counterpart to the Sherrington--Kirkpatrick
model, whose Hamiltonian matches the objective function of a QUBO instance, has not been the subject of the same vast attention in the statistical mechanics literature.
Many important results have been achieved in \cite{panchenko2005generalized, panchenko2018mixed}, in which features of the minimizers and maximizers analogous to the SK model for the Hamiltonian \eqref{eq:general_quadratic_form} have been proven for $J_{i,j}$ independent Gaussian random variables and a quite general choice of the set $\mathcal{A}$, including also the particle case $\eta \in \{0, 1\}^{N}$. In particular it has been proven that the free energy of the system is close to its expectation with probability one (concentration) and that 
its thermodynamic limit exists.

The same problem, in the canonical ensemble, i.e. with a fixed number of particles, has been discussed in \cite{erba2024statistical}. There, for general distribution of $J_{i,j}$, interesting features of the system, most of all for small density of particles, has been found.

Restricting only to the case $\eta \in \{0, 1\}^N$ and $J_{i,j}$ independent Gaussian random variables, with very different and more elementary techniques, in \cite{STgmf} an almost sure lower bound for the minimum per particle of $H$ has been provided together with a different proof of concentration. 

In this paper we will show, in the spirit of \cite{carmona2006universality}, that the results of \cite{STgmf}
are rather robust
with respect to the 
distribution of the $J_{i,j}$'s. 
In particular, we consider the case of independent $J_{i,j}$
with $\expect*{J_{i,j}} = 0$. If the tails of the distributions
of the $J_{i,j}$ are not too fat, then after proper normalization,
the value of the minimum of $H$ is close to its expectation
with high probability and 
does not depend on the actual 
distribution of the $J_{i,j}$. 
Further, with the help of numerical simulations
we provide some insight into the structure of both
the minimizer and the maximizer of $H$ and show that also
this structure is robust.
Note that 
our results hold in the diluted case
as well, that is in the case where each $J_{i,j}$ is zero
with a certain probability. This probability needs not to be fixed, 
but it is allowed to go to $1$ as $N \to \infty$ in a suitable way.

Rigorous results
are presented in Section~\ref{sec:main_results}, whereas
the numerical findings concerning the structure
of the minimizer and the maximizer of $H$
are detailed in Section~\ref{sec:numerical_results}.
Proofs are given in Section~\ref{sec:proofs}.

Throughout the paper we use the jargon of statistical
mechanics. As a consequence, we use expressions like
(particle) configuration when referring to $\eta$, number of
particles when referring to $N$, Hamiltonian for
the quadratic form $\eqref{eq:general_quadratic_form}$ and energy
and energy per particle (of a configuration $\eta$) for, respectively,  
$H(J, \eta)$ and $\frac{H(J, \eta)}{N}$.
Likewise, the minimizer \eqref{eq:general_quadratic_form} is
often referred to as the ground state of $H$ and we use
the expression thermodynamic limit to denote the limit
as $N \to \infty$.

\begin{remark}
    Note that $H$ denotes a family of random variables
    indexed by $N$. However, we do not
    write this dependence explicitly to lighten the notation.
\end{remark}

\section{Main results}\label{sec:main_results}

We consider random instances of QUBO, that is we assume
the matrix $J = \{J_{i,j}\}_{1 \le i,j \le N}$ to be the realization 
of some multivariate random variable.
Unless otherwise specified we will assume the $J_{i,j}$'s to be
independent, identically distributed, and such that $\expect*{J_{i,j}} = 0$ and 
$\variance{J_{i,j}} = \sigma^2$.
As for the value of the normalizing constant $W$ we take
it to be such that the random variable $\sum_{ij} J_{i,j}$
has variance $N$. 

\begin{remark}\label{thm:rem_mixture_distribution}
    Note that, in general, the distribution of the $J_{i,j}$
    is allowed to have atoms. 
    In particular, we will be interested in random variables taking the value
    zero with probability $1 - p(N) = 1 - N^{\delta - 2}$ for $\delta$
    in $\left(1, 2\right]$. In this way we can include in our
    analysis the diluted case, that is the case where the matrix $J$ of 
    the coefficients of the objective function is ``sparse''
    with expected density $\rho = p(N)$. Note that, in this way, the
    average degree of each vertex grows unbounded with $N$.
    For $\sum_{ij} J_{i,j}$ to have variance $N$ one should set
    $W = \frac{1}{\sqrt{\rho N \variance{J_{1,1}}}}$.
\end{remark}

In the remainder of the paper, we will use the following notation.

Let 
\begin{equation*}
    \eta^{\min} :=\argmin\limits_{\eta \in \{0, 1\}^N} H(J, \eta)
    \quad \text{ and } \quad
    \eta^{\max} :=\argmax\limits_{\eta \in \{0, 1\}^N} H(J, \eta),
\end{equation*}
that is $\eta^{\min}$ and $\eta^{\max}$ are, respectively, 
the minimizer and the maximizer of $H$.

Further, let 
\begin{equation*}
    \min_{\eta \in \{0, 1\}^N} H(J, \eta) =: -m_{\min, N} \cdot N
    \quad \text{ and } \quad
    \max_{\eta \in \{0, 1\}^N} H(J, \eta) =: m_{\max, N} \cdot N.
\end{equation*}
In words, $m_{\min, N}$ and $m_{\max, N}$ are, respectively,
the minimum and the maximum \emph{per particle} of $H$.

Moreover, setting $|\eta| = \sum_{i = 1}^N \eta_{i}$ we call 
\begin{equation*}
    \alpha_{\min, N} = \frac{ |\eta^{\min}| }{N}\;
    \text{ and }\;
    \alpha_{\max, N} = \frac{ |\eta^{\max}| }{N}
\end{equation*}
the proportion of \emph{ones} in the minimizer and maximizer of $H$.

We remark that 
$\eta^{\min}$, $\eta^{\max}$,
$m_{\min, N}$, $m_{\max, N}$, 
$\alpha_{\min, N}$ and $\alpha_{\max, N}$ 
are random variables and depend on the
realization of $J$, but we do not write 
this dependence explicitly to lighten the notation.

We are interested in the limiting behavior
of $m_{\min, N}$, $m_{\max, N}$, $\alpha_{\min, N}$ and $\alpha_{\max, N}$.
When the $J_{i,j}$ are Gaussian, 
the existence of the limits 
$m_{\min}$, $m_{\max}$, $\alpha_{\min}$, $\alpha_{\max}$
for the previous quantities
follows from Theorems~1.1 and~1.2 in \cite{panchenko2005generalized}.
Reasonable numerical estimates for the quantities of interest are $\alpha_{\min} = \alpha_{\min} \approx 0.42$  
and $\alpha_{\min} = \alpha_{\max} \approx 0.624$ (see Section~\ref{sec:numerical_results} below).

We show that, as $N$ gets larger, the fluctuations of both the minimum and maximum per particle of $H$ around their expected values vanish
provided some conditions on the tails of the $J_{i,j}$ are satisfied.
More precisely, 
we have the following
\begin{theorem}\label{thm:min_max_self_averaging}
    Let the $J_{i,j}$ be independent identically distributed 
    random variables with $\expect*{J_{1,1}} = 0$ and
    $\variance*{J_{1,1}} = 1$. Then,
    \begin{enumerate}[label=(\alph*)]
    \item\label{thm:min_max_self_averaging_conv_prob} 
        as $N \to \infty$, 
        \begin{gather}
            m_{\min, N} - \expect*{m_{\min, N}} \overset{\operatorname{P}}{\longrightarrow} 0
            \label{eq:convergence_of_minimum_general}\\
            m_{\max, N} - \expect*{m_{\max, N}} \overset{\operatorname{P}}{\longrightarrow} 0.
            \label{eq:convergence_of_maximum_general}
        \end{gather}
        where $\overset{p}{\longrightarrow}$ denotes convergence in probability.
    \end{enumerate}
    Further,
    \begin{enumerate}[resume, label=(\alph*)]
    \item\label{thm:min_max_self_averaging_conv_as} 
          If $\expect*{\abs*{J_{1,1}}^{3}} < \infty$ the convergence
          in \eqref{eq:convergence_of_minimum_general}
          and \eqref{eq:convergence_of_maximum_general}
          is almost sure.
    \item\label{thm:min_max_self_averaging_conv_expfast} 
        Suppose the $J_{i,j}$ are strictly subgaussian, that is, 
        \begin{equation*}\label{eq:def_strictly_subgaussian}
            \expect*{\exp(sJ_{1,1})} \leq \exp\left( \frac{s^{2}}{2}\right),
            \quad \forall s \in \mathbb{R}.
        \end{equation*}
        Then there exists a positive constant $D$ such that,
        for all $z > 0$
        \begin{equation*}
            \proba*{\abs[\big]{m_{\min, N} - \expect*{m_{\min, N}}} > Nz } \leq e^{-DNz}
            \,
            \text{ and }
            \,
            \proba*{\abs[\big]{m_{\max, N} - \expect*{m_{\max, N}}} > Nz } \leq e^{-DNz}
        \end{equation*}
        that is the convergence is \emph{exponentially fast}.        
    \end{enumerate}
\end{theorem}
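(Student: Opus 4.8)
All three assertions are concentration statements for $f(J):=\min_{\eta}H(J,\eta)=-m_{\min,N}\,N$ (the maximum being treated identically), and my plan is to deduce them from a single structural fact fed into the same Doob martingale, using progressively stronger concentration tools matched to the progressively stronger moment hypotheses. The structural fact is a one-coordinate Lipschitz bound. Enumerate the $N^2$ independent entries of $J$ as $X_1,\dots,X_{N^2}$. For each fixed $\eta$ the map $J\mapsto H(J,\eta)=W\sum_{i,j}J_{i,j}\eta_i\eta_j$ is affine in the coordinate $J_{k,l}$ with slope $W\eta_k\eta_l$, and the normalization $W=1/\sqrt N$ forces $\abs{W\eta_k\eta_l}\le 1/\sqrt N$. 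Since a pointwise minimum of functions that are each $L$-Lipschitz in a coordinate is again $L$-Lipschitz in that coordinate, replacing one entry $X_r$ by an independent copy $X_r'$ changes $f$ by at most $\tfrac{1}{\sqrt N}\abs{X_r-X_r'}$. The same inequality holds for $\max_\eta H$, so everything below applies verbatim to $m_{\max,N}$.

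Next I introduce the Doob martingale $V_r:=\expect*{f\mid\mathcal F_r}$ with $\mathcal F_r=\sigma(X_1,\dots,X_r)$, increments $D_r:=V_r-V_{r-1}$, and the standard resampling representation $D_r=\expect*{f-f^{(r)}\mid\mathcal F_r}$, where $f^{(r)}$ is $f$ with $X_r$ replaced by an independent copy. Combined with the Lipschitz bound and conditional Jensen this yields, for every $p$ with $\expect*{\abs{J_{1,1}}^{p}}<\infty$, the uniform increment estimate $\|D_r\|_p\le \tfrac{1}{\sqrt N}\|X_r-X_r'\|_p\le \tfrac{2}{\sqrt N}\|X_1\|_p$. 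Everything then reduces to controlling the size of $f-\expect*{f}=\sum_r D_r$ at the appropriate integrability level.

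For part \ref{thm:min_max_self_averaging_conv_prob} I use only $p=2$: by orthogonality of martingale increments $\variance*{f}=\sum_r\expect*{D_r^2}\le N^2\cdot\tfrac{2}{N}=2N$, so dividing by $N^2$ gives $\variance*{m_{\min,N}}\le 2/N\to 0$ and Chebyshev yields the convergence in probability, using only the finite second moment assumed. For part \ref{thm:min_max_self_averaging_conv_as} I pass to $p=3$ via Burkholder's inequality $\|f-\expect*{f}\|_3\le C\,\big\|\sum_r D_r^2\big\|_{3/2}^{1/2}$, bounding the quadratic variation by Minkowski, $\big\|\sum_r D_r^2\big\|_{3/2}\le\sum_r\|D_r\|_3^2\le N^2\cdot\tfrac{4}{N}\|X_1\|_3^2=4N\|X_1\|_3^2$, which is finite precisely because $\expect*{\abs{J_{1,1}}^{3}}<\infty$. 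This gives $\|f-\expect*{f}\|_3\le C'\sqrt N\,\|X_1\|_3$, hence $\expect*{\abs{m_{\min,N}-\expect*{m_{\min,N}}}^{3}}\le C''N^{-3/2}$; by Markov the deviation probabilities are summable in $N$, and Borel--Cantelli upgrades the convergence to almost sure. For part \ref{thm:min_max_self_averaging_conv_expfast} I show the increments are conditionally subgaussian: $D_r$ is a centered, $\tfrac{1}{\sqrt N}$-Lipschitz function of the single variable $X_r$, and a symmetrization argument (Jensen to introduce an independent copy, the evenness of $\cosh$, and the defining inequality $\expect*{e^{sX_r}}\le e^{s^2/2}$) gives $\expect*{e^{\lambda D_r}\mid\mathcal F_{r-1}}\le e^{\lambda^2/N}$. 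Summing these proxies over the $N^2$ coordinates and invoking the subgaussian Azuma--Hoeffding inequality produces an exponentially small tail $\proba*{\abs{f-\expect*{f}}>t}\le 2\exp\!\big(-t^2/(4N)\big)$, which is exactly the exponentially fast convergence asserted.

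The Lipschitz lemma and the $p=2$ computation are routine. Two points need genuine care. In part \ref{thm:min_max_self_averaging_conv_as} the issue is that the $p=2$ bound alone gives only the non-summable rate $1/N$, so one must \emph{spend} the third moment through the Burkholder step to reach the summable rate $N^{-3/2}$ required by Borel--Cantelli. The main obstacle, however, is the conditional subgaussian estimate in part \ref{thm:min_max_self_averaging_conv_expfast}: a Lipschitz image of a subgaussian variable is subgaussian, but pinning its proxy down to $L^2$ times the \emph{variance} (rather than a larger subgaussian parameter) is precisely where \emph{strict} subgaussianity enters, and carrying this out for the conditional law of $D_r$ given $\mathcal F_{r-1}$, uniformly in $r$, is the step I would write out most carefully.
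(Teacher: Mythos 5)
Your proposal is correct and reaches the paper's conclusions by a genuinely different route. The paper never manipulates the min/max directly: it works with the free energy $F_{\beta}(J)=\beta^{-1}\log\sum_{\eta}e^{\beta H(J,\eta)}$, gets the coordinate-Lipschitz property from derivative bounds (Lemma~\ref{thm:bounds_derivatives_F}), applies Efron--Stein (Theorem~\ref{thm:efronstein}) for part~\ref{thm:min_max_self_averaging_conv_prob}, the moment inequality $\expect*{\abs*{W-\expect*{W}}^{3}}\le\expect*{V^{3/2}}$ (Theorem~\ref{thm:bound_third_moment_w}) for part~\ref{thm:min_max_self_averaging_conv_as}, and the entropy-method MGF bound (Theorem~\ref{thm:mgf}) for part~\ref{thm:min_max_self_averaging_conv_expfast}, finally sending $\beta\to\infty$ since all bounds are uniform in $\beta$. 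You instead observe that a pointwise min/max of coordinate-Lipschitz functions is coordinate-Lipschitz and run a single Doob martingale on $\min_{\eta}H$ itself: orthogonality of increments plays the role of Efron--Stein in (a), Burkholder plus Minkowski plays the role of the moment inequality in (b) (same $N^{-3/2}$ rate, hence the same Borel--Cantelli conclusion), and conditionally sub-Gaussian Azuma replaces the entropy method in (c); your symmetrization argument for $\expect*{e^{\lambda D_r}\mid\mathcal{F}_{r-1}}\le e^{\lambda^{2}/N}$ is sound. Your route is more elementary and avoids the $\beta\to\infty$ limit entirely; the paper's detour through $F_{\beta}$ exists mainly because the same free-energy machinery is reused for the universality theorem.

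One caveat on part~\ref{thm:min_max_self_averaging_conv_expfast}. Your Azuma bound gives the Gaussian tail $\proba*{\abs*{f-\expect*{f}}>t}\le 2e^{-t^{2}/(4N)}$, i.e.\ $2e^{-Nz^{2}/4}$ at per-particle deviation $z$, whereas the theorem asserts a single constant $D$ with bound $e^{-DNz}$ valid for all $z>0$; your bound only yields $D=z/4$ (equivalently, a uniform $D$ on any range $z\ge z_{0}$), so it does not literally produce a $z$-independent linear exponent in the small-$z$ regime. This is less a defect of your argument than of the statement: for each fixed $z$ the decay is exponential in $N$, which is the substance of the claim, and the paper's own derivation of the linear rate contains a slip --- in bounding $\expect*{\exp(tV_{F})}$ the factor $\bigl(e^{2t/N}\bigr)^{N^{2}}=e^{2tN}$ is written as $e^{2t/N}$ and then dropped; restoring it forces the parameter $\theta$ in their optimization to be of order $1/z$, after which their method also delivers a Gaussian-type rate $e^{-cNz^{2}}$, the same form you obtain. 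So your part (c), while not matching the theorem's literal uniform-in-$z$ phrasing, proves what the hypothesis actually yields by this family of techniques.
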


The proof is provided in Section~\ref{sec:proof_thm_min_max_self_averaging}.

Moreover the expected value of the minimum and maximum
per particle
do not depend on the actual distribution of the random couplings, 
provided they have finite variance.

\begin{theorem}\label{thm:universality_min_max}
    Let $J = \left\{ J_{ij} \right\}_{1\leq i,j\leq N}$ 
    and  $Y = \left\{ Y_{i,j} \right\}_{1\leq i,j\leq N}$ 
    be two independent sequences of 
    independent random variables,  
    such that for every $i, j$ 
    $\expect*{ J_{i,j} } =
     \expect*{ Y_{i,j} } = 0$ and 
    $\expect*{J_{i,j}^{2}} = \expect*{Y_i^2} = 1$.
    Also, set 
    $\gamma := \max \left\{\expect*{\left|J_{ij}\right|^3} , 
                           \expect*{\left|Y_{ij}\right|^3} , 
                           1 \leq i,j \leq N\right\}$;
    $\gamma$ may be infinite.
    Then we have, as $N \to \infty$,
    \begin{gather}
        \frac{1}{N} \left|\expect*{\min_{\eta} H(J, \eta)} - 
                          \expect*{\min_{\eta} H(Y, \eta)}\right|  \to 0 \\
        \frac{1}{N} \left|\expect*{\max_{\eta} H(J, \eta)} - 
                          \expect*{\max_{\eta} H(Y, \eta)}\right|  \to 0                 
    \end{gather}
    If furthermore $\gamma < \infty$,
        \begin{gather}
        \frac{1}{N} \left|\expect*{\min_{\eta} H(J, \eta)} - 
                          \expect*{\min_{\eta} H(Y, \eta)}\right|  
                    \leq  C N^{-1/6} \\
        \frac{1}{N} \left|\expect*{\max_{\eta} H(J, \eta)} - 
                          \expect*{\max_{\eta} H(Y, \eta)}\right|  
              \leq  C N^{-1/6}                    
    \end{gather}
    where $C$ is a constant depending only on $\gamma$.     
\end{theorem}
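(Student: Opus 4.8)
The plan is to prove universality by replacing the hard minimum with a smooth \emph{soft-min} (free energy) and then running a Lindeberg-type coordinate-by-coordinate replacement. Fix $\beta>0$ and set
$$F_\beta(J) = -\frac1\beta \log \sum_{\eta\in\{0,1\}^N} e^{-\beta H(J,\eta)}.$$
A standard sandwich gives $\min_\eta H(J,\eta) - \frac{N\log 2}{\beta} \le F_\beta(J) \le \min_\eta H(J,\eta)$, so that $\frac1N\bigl|\mathbb{E}\min_\eta H - \mathbb{E}F_\beta\bigr|\le \frac{\log 2}{\beta}$ for both $J$ and $Y$. It therefore suffices to control $\frac1N\bigl|\mathbb{E}F_\beta(J)-\mathbb{E}F_\beta(Y)\bigr|$ and then balance this quantity against the approximation error $\log 2/\beta$.

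First I would interpolate between $J$ and $Y$ by swapping the $N^2$ entries one at a time, writing the telescoping difference $\mathbb{E}F_\beta(J)-\mathbb{E}F_\beta(Y)$ as a sum of $N^2$ single-coordinate replacements. For each entry $(i,j)$, viewing $F_\beta$ as a function of the scalar $x=J_{i,j}$ (all other coordinates frozen) and Taylor expanding to third order around $x=0$, the zeroth, first and second order terms have equal expectations under $J_{i,j}$ and $Y_{i,j}$ because these variables share their first two moments. Hence each replacement contributes only the third-order remainder, bounded by $\tfrac16\,\|\partial_x^3 F_\beta\|_\infty\bigl(\mathbb{E}|J_{i,j}|^3+\mathbb{E}|Y_{i,j}|^3\bigr)$.

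The key step is bounding the derivatives of $F_\beta$ in a single coupling. Since $\partial_{J_{i,j}}H = W\eta_i\eta_j$ with $\eta_i\eta_j\in\{0,1\}$, a direct computation identifies the derivatives of $F_\beta$ with Gibbs cumulants of $W\eta_i\eta_j$: one finds $\partial_{J_{i,j}}F_\beta = W\langle \eta_i\eta_j\rangle$, $\partial^2_{J_{i,j}}F_\beta = -\beta W^2\,\mathrm{Var}_{\mathrm{Gibbs}}(\eta_i\eta_j)$, and $\partial^3_{J_{i,j}}F_\beta$ equal to $\beta^2 W^3$ times the third Gibbs cumulant. Because $\eta_i\eta_j$ is a $\{0,1\}$ variable, all these cumulants are bounded by an absolute constant, giving $\|\partial^3_{J_{i,j}}F_\beta\|_\infty \le C\beta^2 W^3$. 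With $W = N^{-1/2}$ and $N^2$ entries, the replacement error sums to at most $C\gamma\beta^2 N^{2}W^3 = C\gamma\beta^2 N^{1/2}$; dividing by $N$ yields $\frac1N\bigl|\mathbb{E}F_\beta(J)-\mathbb{E}F_\beta(Y)\bigr|\le C\gamma\beta^2 N^{-1/2}$. Adding the soft-min error $2\log 2/\beta$ and optimizing over $\beta$ (the choice $\beta\asymp (N^{1/2}/\gamma)^{1/3}\asymp N^{1/6}$) produces the claimed bound $C N^{-1/6}$ when $\gamma<\infty$.

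When $\gamma=\infty$ the third-order Taylor bound is unavailable, and this is the main obstacle. Here I would run the Lindeberg argument with truncation: split the remainder of each replacement using $|R(x)|\le C\min\bigl(\beta^2W^3|x|^3,\ \beta W^2 x^2\bigr)$, so that $\mathbb{E}R \le C\beta^2W^3 K\,\mathbb{E}[x^2] + C\beta W^2\,\mathbb{E}\bigl[x^2\mathbf{1}_{|x|>K}\bigr]$ for a threshold $K$. Summing over the $N^2$ entries and dividing by $N$ gives a bound of order $\beta^2N^{-1/2}K + \beta\,\varepsilon_K$, where $\varepsilon_K=\sup_{i,j}\mathbb{E}\bigl[J_{i,j}^2\mathbf{1}_{|J_{i,j}|>K}\bigr]+(\text{same for }Y)\to 0$ as $K\to\infty$ by finiteness of the variance. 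Letting $\beta=\beta(N)\to\infty$ and $K=K(N)\to\infty$ sufficiently slowly makes both this error and the soft-min error $2\log 2/\beta$ vanish, proving convergence to $0$ with no rate. The statements for the maximum follow verbatim upon replacing $H$ by $-H$ (equivalently, using the soft-max $\frac1\beta\log\sum_\eta e^{\beta H}$), since all derivative bounds are insensitive to this sign change.
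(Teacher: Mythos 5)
Your proposal is correct and follows essentially the same route as the paper: a Lindeberg--Chatterjee entry-by-entry replacement on the free energy $F_{\beta}$, with the derivative bounds obtained from Gibbs averages of the $\{0,1\}$ variable $\eta_i\eta_j$ (the paper's Lemma on the derivatives of $F_\beta$), the soft-min/soft-max sandwich $\abs{\min_\eta H - F_\beta}\leq \beta^{-1}N\log 2$, and the choice $\beta \asymp N^{1/6}$ when $\gamma<\infty$. Your truncation argument for $\gamma=\infty$, with $\abs{R(x)}\leq C\min\left(\beta^2 W^3\abs{x}^3,\ \beta W^2 x^2\right)$ and $\beta, K \to \infty$ slowly, is exactly the paper's estimate as well, which the paper merely instantiates with the explicit choices $K=N^{1/4}$ and $\beta=g(N^{1/4})^{-1/2}$.
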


The proof comes as a consequence of an analogous 
result on the universality of the free energy which is presented 
in Section~{\ref{sec:proof_thm_universality_min_max}}. 

As an immediate consequence of Theorem~\ref{thm:universality_min_max} and 
Theorem~\ref{thm:min_max_self_averaging}\ref{thm:min_max_self_averaging_conv_expfast},
the minimum and maximum per particle have the same almost sure lower bound of the Gaussian case (see \cite{STgmf})
irrespective of the details of the distribution of the $J_{i,j}$.
More precisely, let $\nu^{-}(m) = |\{\eta : H(J, \eta) \leq -m N\}|$ denote the
number of configurations whose energy is less or equal to $-m N$ and, similarly,
let $\nu^{+}(m) = |\{\eta : H(J, \eta) \ge m N\}|$ be the number of
configurations with energy at least equal to $m N$. Then:

\begin{corollary}\label{thm:almost_sure_bound_for_min_and_max}
    Let the $J_{i,j}$'s be independent identically distributed 
    strictly subgaussian random variables with
    $\expect*{J_{1,1}} = 0$ and
    $\variance*{J_{1,1}} = 1$ 
    and let 
    $I(x) = - x \log(x) - (1-x) \log(1 - x)$.
    Then, for large values of $N$ and for some constant $C$,
    $\proba*{\nu^{-}(m^{\star}) > 0} \leq e^{-CN}$ and
    $\proba*{\nu^{+}(m^{\star}) > 0} \leq e^{-CN}$
    where $m^{\star} \approx 0.562$ 
    is the extremal value of $m$ such that the function
    $I(\alpha) - \frac{m^2}{2\alpha^{2}(1 - \alpha)^{2}}$,  
    for fixed $m$, has no zeros. This value is 
    obtained for $\alpha = \alpha^{\star} \approx 0.644$. 
\end{corollary}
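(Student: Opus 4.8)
The plan is to deduce the corollary by transporting the Gaussian first-moment estimate of \cite{STgmf} to the strictly subgaussian setting, the two bridges being the universality of the expected extremal energy (Theorem~\ref{thm:universality_min_max}) and the exponential concentration of Theorem~\ref{thm:min_max_self_averaging}\ref{thm:min_max_self_averaging_conv_expfast}. I would begin from the union bound $\proba*{\nu^-(m) > 0} \le \expect*{\nu^-(m)} = \sum_{\eta}\proba*{H(J,\eta) \le -mN}$ and organise the sum by the number of ones, writing $\expect*{\nu^-(m)} = \sum_{\alpha}\binom{N}{\alpha N}\,\proba*{H(J,\eta_\alpha) \le -mN}$ for a representative configuration $\eta_\alpha$ with $|\eta_\alpha| = \alpha N$. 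Stirling's formula converts the binomial coefficient into $e^{N I(\alpha)(1+o(1))}$, which is precisely how the entropy $I$ enters the threshold condition, and the sum ranges over only $O(N)$ density classes.

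The distribution enters only through the single-configuration tail. For $\eta$ with $|\eta| = \alpha N$ one has $H(J,\eta) = W\sum_{i,j\,:\,\eta_i=\eta_j=1}J_{i,j}$, a weighted sum of independent strictly subgaussian variables, so the defining inequality $\expect*{\exp(sJ_{1,1})}\le\exp(s^2/2)$ factorises over the entries and bounds the moment generating function of $-H(J,\eta)$ by that of the matching centered Gaussian. A Chernoff optimisation then gives $\proba*{H(J,\eta)\le -mN}\le\exp\!\left(-N\,\frac{m^2}{2\alpha^2}\right)$, the very exponent produced by Gaussian couplings. This coincidence of exponents is the decisive role of strict (rather than merely ordinary) subgaussianity, and it is what allows the Gaussian computation of \cite{STgmf} to be replayed unchanged; the sharp rate $\frac{m^2}{2\alpha^2(1-\alpha)^2}$ appearing in the statement is the outcome of the refined, correlation-aware version of that computation, discussed below. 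The companion bound for $\nu^+$ follows by applying the identical argument to $-H$.

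To see why the corollary is \emph{immediate} given the two theorems, I would note that Theorem~\ref{thm:universality_min_max} forces $\tfrac1N\expect*{\min_\eta H(J,\eta)}$ and $\tfrac1N\expect*{\max_\eta H(J,\eta)}$ to share their limits with the Gaussian model, so the extremal value $m^\star$ — the largest $m$ for which $I(\alpha)-\frac{m^2}{2\alpha^2(1-\alpha)^2}$ has no zero, attained at $\alpha^\star$ — is distribution-independent, while Theorem~\ref{thm:min_max_self_averaging}\ref{thm:min_max_self_averaging_conv_expfast} supplies the exponential deviation estimate that turns convergence in probability into the almost-sure, $e^{-CN}$ statement. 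The main obstacle I anticipate is not the union bound but the derivation of the sharp exponent: the naive bound over the $e^{NI(\alpha)}$ configurations of a fixed density is lossy because their energies are strongly correlated — two configurations sharing most of their ones have nearly equal energy — so matching the rate of \cite{STgmf} requires passing to the recentered fluctuation $\sum_{i,j}J_{i,j}(\eta_i-\alpha)(\eta_j-\alpha)$, whose variance carries the factor $\alpha^2(1-\alpha)^2$ (since $\sum_i(\eta_i-\alpha)^2 = \alpha(1-\alpha)N$), rather than treating the configurations as independent. A secondary point to check is that the subgaussian tail estimate holds with an exponent uniform in $\alpha$, so that the $O(N)$ prefactor from summing over the density classes is absorbed into the exponential rate.
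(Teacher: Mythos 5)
Your third paragraph is, in essence, the paper's entire proof: the corollary is stated there as an \emph{immediate} consequence of Theorem~\ref{thm:universality_min_max} and Theorem~\ref{thm:min_max_self_averaging}\ref{thm:min_max_self_averaging_conv_expfast}, with the Gaussian-case bound, the rate function $I(\alpha)-\frac{m^2}{2\alpha^2(1-\alpha)^2}$, and the constants $m^\star\approx 0.562$, $\alpha^\star\approx 0.644$ imported wholesale from \cite{STgmf}; no first-moment computation is redone. Two points in your version of this transfer need tightening. First, universality does not make $m^\star$ ``distribution-independent'' --- $m^\star$ is a deterministic constant defined by a variational problem; what Theorem~\ref{thm:universality_min_max} transfers is the estimate $\expect*{m_{\min,N}} \le (\text{Gaussian value}) + o(1)$. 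Second, to convert concentration around the expectation into $\proba*{\nu^-(m^\star)>0} = \proba*{m_{\min,N}\ge m^\star} \le e^{-CN}$ you need a strict gap between the limiting Gaussian expectation and $m^\star$; without it the transfer only yields the bound at level $m^\star+\varepsilon$ for each fixed $\varepsilon>0$.

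The genuine gap is in what you present as the heart of the argument, namely the direct first-moment derivation of the exponent $\frac{m^2}{2\alpha^2(1-\alpha)^2}$. This cannot work as you describe. For a fixed $\eta$ with $|\eta|=\alpha N$, the energy $H(J,\eta)=\frac{1}{\sqrt N}\sum_{i,j:\,\eta_i=\eta_j=1}J_{i,j}$ has variance exactly $\alpha^2 N$, so in the Gaussian case $\proba*{H(J,\eta)\le -mN}=e^{-\frac{m^2 N}{2\alpha^2}(1+o(1))}$; since $\frac{m^2}{2\alpha^2(1-\alpha)^2}>\frac{m^2}{2\alpha^2}$ for $\alpha\in(0,1)$, no per-configuration tail bound --- however refined --- can produce the larger exponent. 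Moreover, $\expect*{\nu^-(m)}$ is computed by linearity of expectation, so correlations among configurations are simply irrelevant to the first moment; ``correlation-aware'' bookkeeping cannot change it. Finally, the recentered form $\sum_{i,j}J_{i,j}(\eta_i-\alpha)(\eta_j-\alpha)$ is not $\sqrt N\, H(J,\eta)$: the difference contains $\alpha\sum_i(\eta_i-\alpha)(R_i+C_i)$, with $R_i,C_i$ the row and column sums of $J$, and optimizing the choice of $\eta$ at fixed density against the signs of $R_i+C_i$ makes this term of order $N^{3/2}$, i.e.\ of order $N$ in the energy, so it cannot be discarded. The net effect is that your union-bound route proves the corollary only with the threshold $\max_\alpha \alpha\sqrt{2I(\alpha)}\approx 0.8$ (attained near $\alpha\approx 0.77$) in place of $m^\star\approx 0.562$. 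The sharper constant must either be imported from the Gaussian analysis of \cite{STgmf} --- whose argument is not a plain per-configuration union bound --- and then transferred by universality plus concentration, which is exactly what the paper does, or be re-derived by reproducing that finer argument, which your sketch does not do.
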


\section{Conjectures and numerical results}\label{sec:numerical_results}

In this section, we present some numerical results concerning the minimum and
maximum per particle and the structure of the minimizer and the maximizer of
QUBO instances with random coefficients. 
The findings presented in Section~\ref{sec:numerical_min_max_per_particle} 
allow us to highlight some interesting features concerning the connection between particles contributing to the minimizer of $H$ (that is components of $\eta^{\min}$ equal to $1$) and particles contributing to the maximizer of $H$ (that is components of $\eta^{\max}$ equal to $1$) and the probability of the events $\{\eta^{\min}_{i} = 1\}$ and $\{\eta^{\max}_{i} = 1\}$.
Simulations have been carried over using Monte Carlo Probabilistic Cellular Automata (PCA) as those introduced in
\cite{STgmf,apollonio2019criticality,d2021parallel,apollonio2022shaken,scoppola2022shaken3D}
The advantage of these algorithms is represented by their inherently parallel
nature which allows the exploitation of parallel architectures at their fullest while preserving a quality of the solution comparable to the one obtained with \emph{single spin flip} MCMC algorithms as outlined in
\cite{STgmf,apollonio2022shaken,fukushima2023mixing}. Thanks to these algorithms we were able, in the diluted case, to simulate effectively systems with $N$ up to $128000$.

In particular, the algorithm used in the simulations works as follows.
Let 
\begin{equation}\label{eq:double-hamiltonian}
    H(\eta, \tau) = \sum_{i,j} J_{ij}\eta_{i}\tau_{j} + q \sum_{i}(1 - \sigma_{i}\tau_{i})
\end{equation}

As a preliminary step consider the symmetrized version of $\tilde{J}$ of $J$ (that is $\tilde{J}=\frac{J + J^{T}}{2}$) and note that the value of the Hamiltonian dos not change if we replace $J$ with $\tilde{J}$.
Let $H(\eta, \tau) = \beta \sum_{i}h_{i(\eta)\tau_{i}} + q\sum_{i}\left[ \eta_{i}(1-\tau_{i}) + \tau_{i}(1-\eta_{i}) \right]$ with $h_{i}(\eta) =\frac{1}{\sqrt{ N }}\sum_{j}\tilde{J}_{i,j}\eta_{j}$ and $\beta$ and $q$ two positive parameters and choose the transition matrix to be $P_{\eta, \tau} = \frac{e^{ -H(\eta, \tau) }}{\sum_{\tau}e^{ -H(\eta,\tau) }}$.  Rewriting $P_{\eta, \tau}$ as
\begin{equation}    
P(\eta, \tau) = \prod_{i} \frac{e^{ -\beta h_{i}(\eta)\tau_{i}-q[\eta_{i}(1-\tau_{i}) + \tau_{i}(1-\eta_{i})] }}{Z_{i}}
\end{equation}
we immediately see that, conditionally on $\eta$, the value of  each $\tau_{i}$ can be sampled independently with the following probabilities:
\begin{equation}
    P(\tau_{i}=1) = \frac{e^{ -\beta h_{i}(\eta) - q \eta_{i} }}{Z_{i}} \; \text{ and } \; P(\tau_{i}=0|\eta) = \frac{e^{ -q (1-\eta_{i}) }}{Z_{i}}
\end{equation}
with $Z_{i} = e^{ -\beta h_{i} + q \eta_{i} } + e^{ - q(1-\eta_{i}) }$.
Then, at least in principle, each component of of the configuration could be updated on a dedicated \emph{core}.

Note that, by the symmetry of $\tilde{J}$ we have that $\frac{\sum_{\eta}e^{ -H(\eta,\tau) }}{\sum_{\eta,\tau}e^{ -H(\eta,\tau) }}$ is the reversible measure of $P_{\eta,\tau}$. Further observe that $H(\eta, \eta) = \beta H(\eta)$. 
The intuitive rationale why the heuristic algorithm is expected to work is the following. As $q$ gets large, the weight of pairs $(\eta, \tau)$ with $\eta \neq \tau$ in $\pi(\sigma)$ becomes smaller and the stationary measure of the algorithm becomes closer to the Gibbs measure $\frac{e^{ -\beta H(\eta) }}{Z}$ with Hamiltonian $H$ and inverse temperature $\beta$. A comparison between this algorithm and the Metropolis one is provided in \cite{STgmf}.

The computation of the local fields $h_{i}$ proves to be the computationally expensive part of the algorithm. However the vector of local fields
$\{ h_{i}(\eta) \}_{i = 1, \dots, N}$ is the matrix-vector product $J \cdot \eta$ and can be computed using highly optimized linear algebra libraries and exploiting a parallel computing device such as a GPU.
Further observe that $k$ simulations can be carried over in parallel, possibly with different parameters $\beta$ and $q$. Let $\vec{E}$ be the $N \times k$ matrix whose columns contain the $k$ configurations to be updated. In this case the all collection of vectors of fields $\{ h_{i}(\eta^{(m)} ) \}_{i = 1, \dots, N; m = 1, \dots, k}$ (encoded, again, as an $N \times k$ matrix) is the matrix-vector product $J \cdot \vec{E}$. Also this product can be computed effectively using highly optimized libraries and is, in general, substantially faster than computing $k$ matrix vector products.

We remark that the values we found are heuristic and we have no guarantee that 
they coincide with the true values of the minimimizer and the maximizer of $H(J)$. 
However, as already discussed in more details in \cite{STgmf} the exact algorithm introduced in \cite{rendl2010solving} can be exploited to compute the exact minima and maxima of $H(J)$ for values of $N$ up to 150. The absolute value of the minimum and the maximum per particle of $H(J)$ are steadily close to $0.42$ already for values of $N$ between 80 and 150.
Moreover, for instances with sizes up to $150$, we verified that our algorithm typically finds the same optimum as the exact algorithm. 
For the larger instances, we retained the minimum and maximum found in a set of runs of the algorithms with different pairs of values of $\beta$ and $q$. 
\subsection{Minimum and maximum per particle}
\label{sec:numerical_min_max_per_particle}
The standard normal case has been investigated extensively in \cite{STgmf}:
for values of $N$ relatively small, both $m_{\min, N}$
and $m_{\max, N}$ oscillate around a value about $0.42$ whereas $\alpha_{\min,
N}$ and $\alpha_{\max, N}$ very rapidly approach a value about $0.624$
(see Fig.~\ref{fig:m_and_alpha_N_standard_gaussian}).
Theorem \ref{thm:universality_min_max} states that both $m_{\min, N}$ and
$m_{\max, N}$ are robust with respect to the distribution of the $J_{i,j}$.
Numerical simulations show that this robustness also concerns the proportion
of ones in both the minimizer and maximizer of $H$.

Fig.~\ref{fig:m_and_alpha_N_exponential} shows the behavior of 
$m_{\min, N}, m_{\max, N}, \alpha_{\min, N}, \alpha_{\max, N}$ for
$J_{i,j}$ with (shifted) exponential distribution. Even if the
exponential distribution is rather asymmetric and is not subgaussian, 
for values of $N$ relatively small the average energy per particle 
and the proportion of ones in both the minimizer and the maximizer
approach those of the standard normal case.
\begin{figure}[H]
    \centering
        \begin{subfigure}[c]{0.78\textwidth}
            \includegraphics[width=\textwidth]{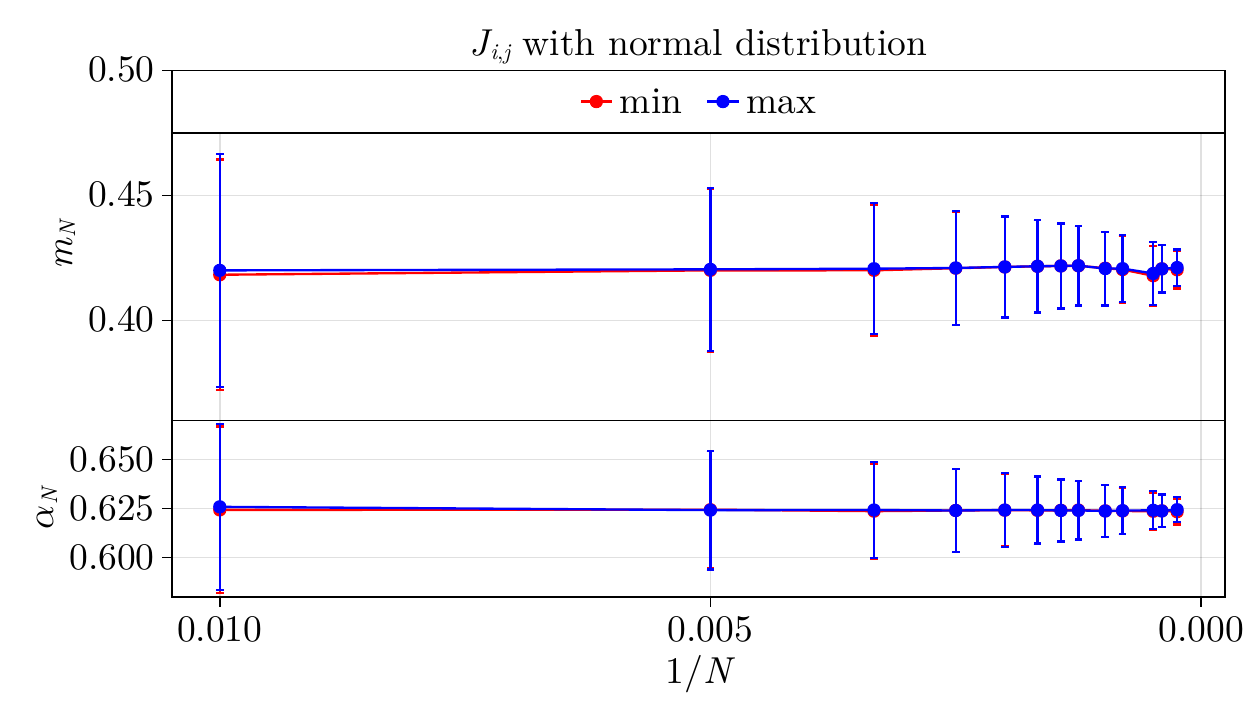}
            \caption{}
        \end{subfigure}
        \hfill
        \begin{subtable}[c]{0.19\textwidth}            
            \begin{tabular}{@{}lll@{}}
                \toprule
                $N$  & $m_N$ & $\alpha_N$ \\ \midrule
                100  & 0.419 & 0.625      \\
                200  & 0.42  & 0.624      \\
                300  & 0.42  & 0.624      \\
                400  & 0.421 & 0.624      \\
                500  & 0.421 & 0.624      \\
                600  & 0.422 & 0.624      \\
                700  & 0.422 & 0.624      \\
                800  & 0.422 & 0.624      \\
                1024 & 0.421 & 0.624      \\
                1250 & 0.42  & 0.624      \\
                2048 & 0.417 & 0.624      \\
                2500 & 0.421 & 0.624      \\
                4096 & 0.421 & 0.624      \\ \bottomrule
            \end{tabular}
            \caption{}
            \label{tab:gaussian_estimates}
        \end{subtable}
        \caption{Average values of $m_{\min, N}$, $m_{\max, N}$,
                 $\alpha_{\min}$ and $\alpha_{\max, N}$
                 in the case of
                 standard normally distributed $J_{i,j}$'s.
                 Values of $m_N$ and $\alpha_N$ in the table
                 are computed as
                 averages of $m_{\min, N}, M_{\max, N}$ and
                 $\alpha_{\min}, \alpha_{\max, N}$ respectively.
                 The length of each branch of the error bars is equal to to standard error of the empirical average. Averages are computed over 10000 realization of $J$ for $N$ up to $1024$ and between $1000$ and $5000$ realization of $J$ for larger instances}
        \label{fig:m_and_alpha_N_standard_gaussian}
\end{figure}
\begin{figure}[H]
    \begin{center}
        \includegraphics[width=0.78\textwidth]{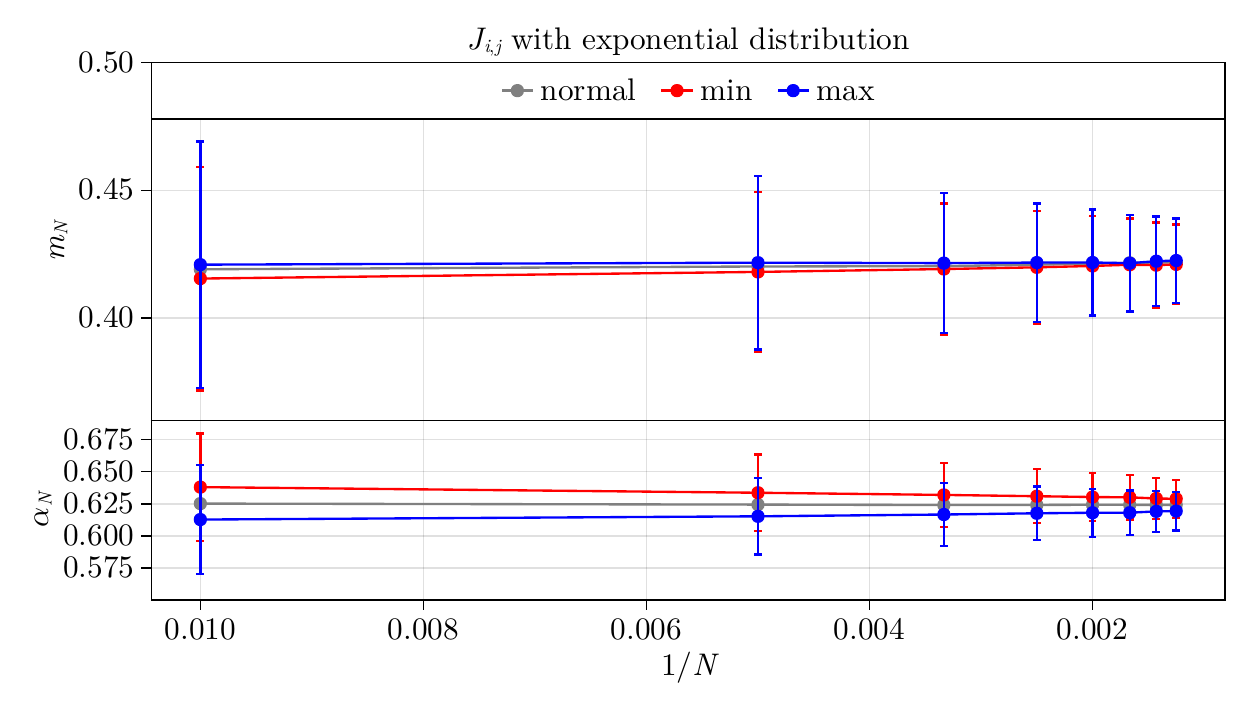}
    \end{center}
    \caption{In this case each $J_{i,j}$ is distributed as $X - 1$ with $X$ an exponential random variable with expected value $1$. The exponential distribution is rather asymmetric and is not subgaussian. However, already for $N$ of order ``a few hundred'' the values of the minimum and maximum per particle of $H$ and the proportion of ``ones'' in the minimizer and the maximizer of $H$ approach those of the normal case. The length of each branch of the error bars is equal to to standard error of the empirical average (only shown for the exponential distribution). Averages are computed over 10000 realizations of $J$
    The curves for the normal distributions in this chart are the averages of the values  $m_{\min, N}$ and $m_{\max, N}$ in the first panel and $\alpha_{\min, N}$ and $\alpha_{\max, N}$ in the second panel.}
    \label{fig:m_and_alpha_N_exponential}
\end{figure}
\begin{figure}[H]
    \begin{center}
        \includegraphics[width=0.78\textwidth]{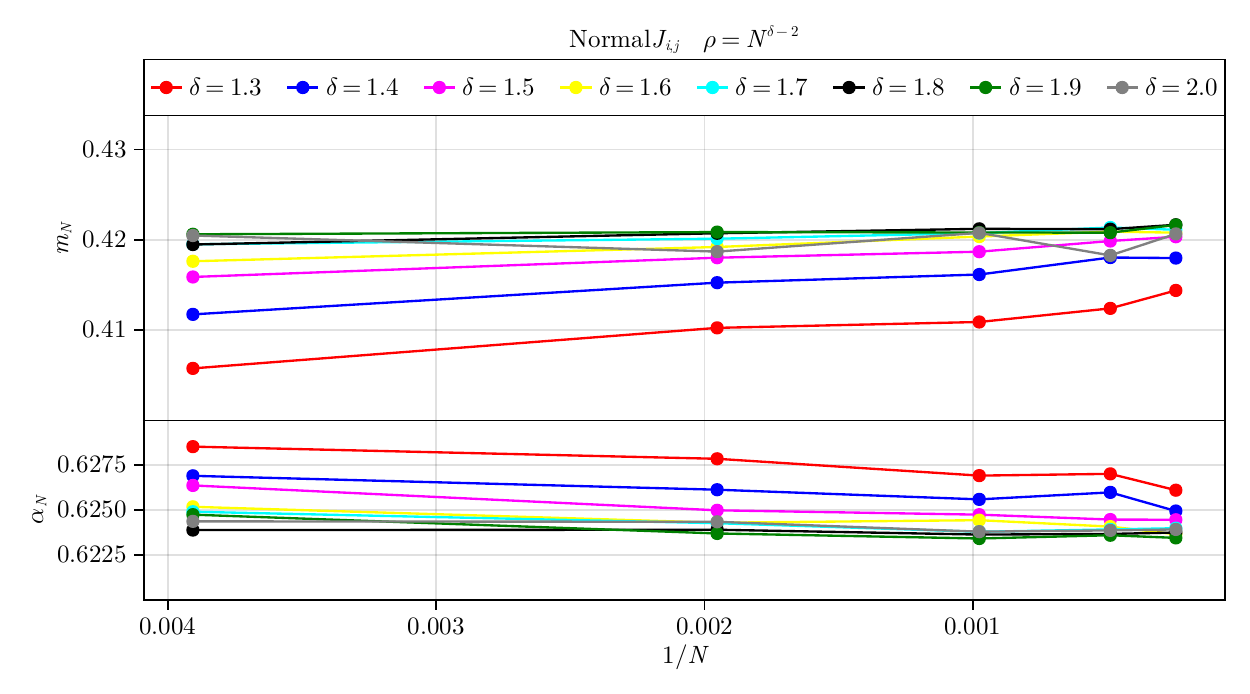}
    \end{center}
    \caption{Comparison (top panel) of the average of the minimum 
             and maximum \emph{per particle} of $H$ in the diluted case with the corresponding value in the standard normal case for several values of $\delta$. In all these cases the average value of the minimum and maximum per particle appear to approach the same limit (about $0.42$). As for the values of $\alpha_{N}$ (bottom panel), these appear to be very close to the value of $\alpha_{N}$ of the standard normal case. Averages, for $\delta < 2$, are computed over $100$ realization of $J$.}
    \label{fig:m_and_alpha_N_diluted_case}
\end{figure}

In our simulations, we also considered the \emph{diluted}
case, that is we took $J_{i,j}$ to be zero with probability
$1 - p_{\delta}(N)$ and a standard normal random variable
otherwise where $p_{\delta}(N) = N^{\delta - 2}$.
Findings concerning the behavior of 
$m_{\min, N}, m_{\max, N}, \alpha_{\min, N}, \alpha_{\max, N}$ 
are presented in
Fig.~\ref{fig:m_and_alpha_N_diluted_case} and
Fig.~\ref{fig:m_and_alpha_N_highly_diluted_case}.
The log-log plot of Fig.~\ref{fig:m_convergence_diluted_case}
suggests that $m_N$ converges to $\bar{m}$ as a power
of $N$.
\begin{figure}[H]
    \begin{center}
        \includegraphics[width=0.78\textwidth]{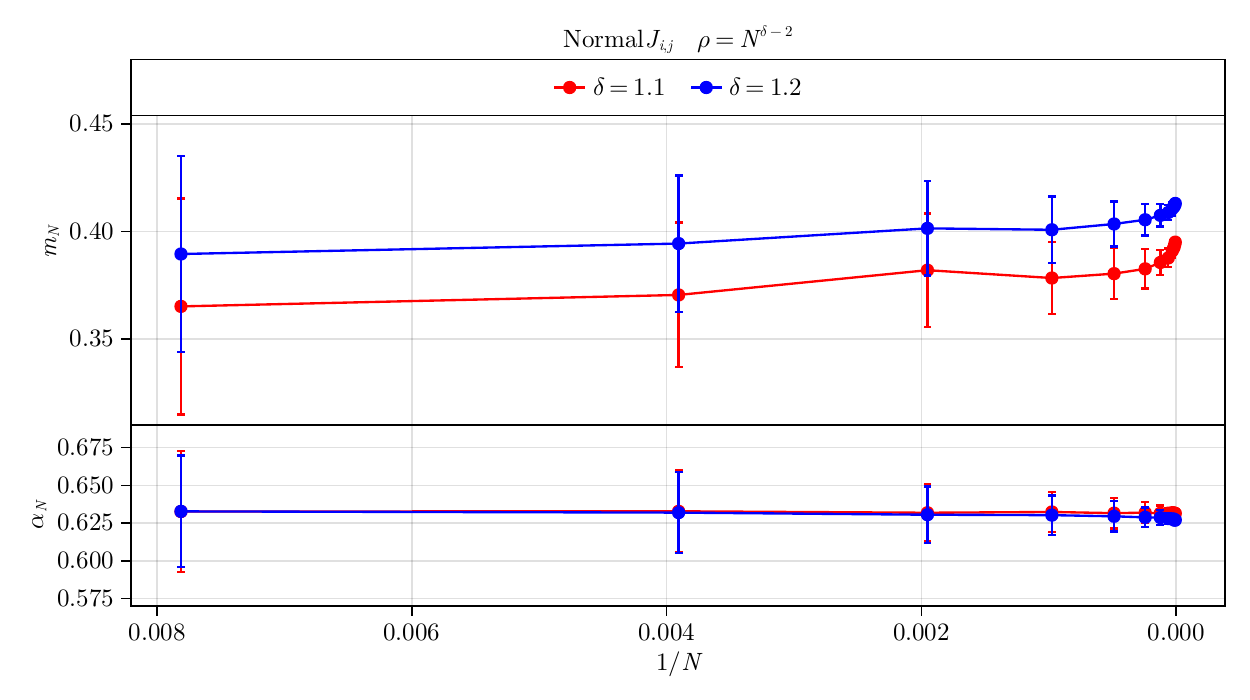}
    \end{center}
    \caption{Average of the minimum per particle and of $\alpha_{\min, N}$
            in the diluted case for $\delta = 1.2$ and $\delta = 1.1$.
            The length of each branch of the error bars is equal to standard error of the empirical average.
            Averages are computed over $900$ realizations of $J$ for
            $N$ up to $1024$, $100$ realizations for $N$ between $2048$
            and $32000$ and 36 realizations for larger values of $N$.
            }
    \label{fig:m_and_alpha_N_highly_diluted_case}
\end{figure}

\begin{figure}[H]
    \begin{center}
        \includegraphics[width=0.78\textwidth]{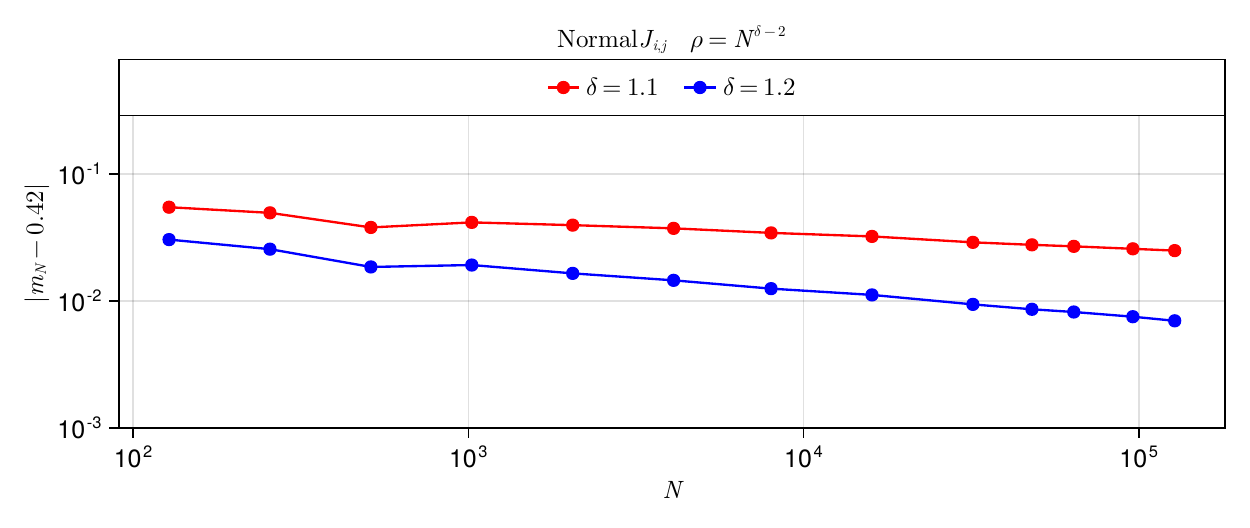}
    \end{center}
    \caption{log-log plot of the distance of the absolute value of the minimum per particle from the conjectured limiting value $\bar{m}$ as a function of $N$ in the diluted case for $\delta = 1.2$ and $\delta = 1.1$.
    For both $\delta = 1.2$ and $\delta = 1.1$ the lines appear to have a negative slope suggesting that, in both cases, the absolute value of the minimum per particle of $H$ will reach the value $\bar{m}$}
    \label{fig:m_convergence_diluted_case}
\end{figure}

We highlight that as $\delta$ becomes smaller, the density
of $J$ becomes \emph{very small}. To give an idea, values of the
density of $J$ are given in
Table~\ref{tab:densities}. For instance, with $N = 128000$ and $\delta = 1.1$ 
the expected number of nonzero entries in each row of $J$ is less than $4$. 
\begin{table}[H]
    \centering
        \begin{tabular}{@{}lccccccccccc@{}}
            \toprule
            N & 4000 & 4000 & 4000 &  8000 & 8000 & 8000  & 16000 & 16000 & 16000 & 128000  & 128000\\ 
            \midrule
            $\delta$ & 1.9 & 1.8 & 1.3 & 1.9 & 1.8 & 1.3 & 1.9 & 1.8 & 1.3 & 1.3 & 1.1\\
            \midrule
            $\rho$ & 0.4363 & 0.1904 & 0.003 & 0.4071 & 0.1657 & 0.0019 & 0.3798 & 0.1443 & 0.0011 & 0.0003 & \num{3e-5}\\ 
            \bottomrule
        \end{tabular}
        \caption{Density of matrix $J$ for several values of $N$ and $\delta$}
        \label{tab:densities}
\end{table}

\begin{table}[H]
    \centering
    \begin{tabular}{@{}lccSS@{}}
        \toprule
        Instance id & \multicolumn{1}{c}{optimum in \cite{wang2013probabilistic}} & \multicolumn{1}{c}{N} & \multicolumn{1}{c}{$\rho$} & \multicolumn{1}{c}{$m_N$} \\ \midrule
        p3000.1  & 3931583  & 3000 & 0.5 & 0.412 \\
        p3000.2  & 5193073  & 3000 & 0.8 & 0.431 \\
        p3000.3  & 5111533  & 3000 & 0.8 & 0.424 \\
        p3000.4  & 5761822  & 3000 & 1   & 0.427 \\
        p3000.5  & 5675625  & 3000 & 1   & 0.421 \\
        p4000.1  & 6181830  & 4000 & 0.5 & 0.421 \\
        p4000.2  & 7801355  & 4000 & 0.8 & 0.42  \\
        p4000.3  & 7741685  & 4000 & 0.8 & 0.417 \\
        p4000.4  & 8711822  & 4000 & 1   & 0.42  \\
        p4000.5  & 8908979  & 4000 & 1   & 0.429 \\
        p5000.1  & 8559680  & 5000 & 0.5 & 0.417 \\
        p5000.2  & 10836019 & 5000 & 0.8 & 0.418 \\
        p5000.3  & 10489137 & 5000 & 0.8 & 0.404 \\
        p5000.4  & 12252318 & 5000 & 1   & 0.422 \\
        p5000.5  & 12731803 & 5000 & 1   & 0.439 \\
        p6000.1  & 11384976 & 6000 & 0.5 & 0.422 \\
        p6000.2  & 14333855 & 6000 & 0.8 & 0.42  \\
        p6000.3  & 16132915 & 6000 & 1   & 0.423 \\
        p7000.1  & 14478676 & 7000 & 0.5 & 0.426 \\
        p7000.2  & 18249948 & 7000 & 0.8 & 0.425 \\
        p7000.3  & 20446407 & 7000 & 1   & 0.425 \\ \bottomrule
    \end{tabular}
    \caption{Values of the maximum per particle for some benchmark 
             instances. In these instances $J_{i,j}$ 
             drawn uniformly at random from the integers
             in $[-100, 100]$ and the matrix $J$ is symmetric.
             Consequently, to compare the values with
             those of the standard normal case, 
             the normalizing constant $W$ appearing
             in \eqref{eq:general_quadratic_form} must
             be set equal to $\sqrt{\frac{6}{\rho N (201^{2} - 1)}}$.
             The \emph{optima} used to compute $m_N$ 
             are the best-known solutions reported in 
             \cite{wang2013probabilistic}}
    \label{tab:m_N_uniform_benchmarks}
\end{table}
In the operation research literature, to test optimization algorithms,
it is common to consider
random instances of QUBO where the $J_{i,j}$ have a uniform
distribution (see, e.g., \cite{wang2013probabilistic,alidaee2017simple,waidyasooriya2020gpu,liang2022data})
and where the matrix $J$ is, possibly, \emph{sparse}.
Values of the best-known maximizer for some benchmark 
QUBO instances in the case of uniformly distributed $J_{i,j}$
are reported in Table~\ref{tab:m_N_uniform_benchmarks}.
It is apparent that also in these cases, the values of the optima
per particle agree with those of the standard normal case.

\subsection{Structure of minimizer and maximizer}
\label{sec:structure_min_max}
Observe that for any realization of $J$
the minimizer and the maximizer of the Hamiltonian are with probability 1 since the distribution of the couplings is absolutely continuous.
Then it is possible to partition the indices $1, 2, \ldots, N$
into four sets: 
\begin{itemize}[label=\textendash]
    \item $I_{1} = \{i : \eta^{\min}_{i} = 1, \; \eta^{\max}_{i} = 0\}$;
    \item $I_{2} = \{i : \eta^{\min}_{i} = 1, \; \eta^{\max}_{i} = 1\}$; 
    \item $I_{3} = \{i : \eta^{\min}_{i} = 0, \; \eta^{\max}_{i} = 1\}$;
    \item $I_{4} = \{i : \eta^{\min}_{i} = 0, \; \eta^{\max}_{i} = 0\}$.
\end{itemize}
To refer properly to the cardinality of these sets we give the following
\begin{definition}\label{def:alpha_i}
    $\alpha_{k,N} := \frac{ |I_{k}| }{ N }, \, k = 1, 2, 3, 4$.    
\end{definition}
With an abuse of notation, we say that the $i$-th row (column) 
of $J$ belongs to $I_{k}$ if $i \in I_{k}$.
Note that $\alpha_{1,N}$ can be interpreted as the 
proportion of $1$ appearing in the minimizer but not
appearing in the maximizer of $H$. Similar interpretations
can be given for $\alpha_{2,N}$, $\alpha_{3,N}$ and $\alpha_{4,N}$.
\begin{remark}\label{thm:alpha_as_sum_of_alpha_i}
    With the definition of $\alpha_{i}$ given above it is immediate
    to see that 
    $\alpha_{\min,N} = \alpha_{N,1} + \alpha_{N,2}$ and 
    $\alpha_{\max,N} = \alpha_{N,2} + \alpha_{N,3}$
\end{remark}

Leveraging on the definition of $I_{k}$, it is possible to
partition $J$ into 16 blocks 
\[
    J[k,\ell] := \{J_{i,j}, \; i \in I_{k}, \, j \in I_{\ell}\}
\]
Clearly, $J[I_{k}, I_{\ell}]$ is a sub matrix of $J$ with
$N\alpha_{k,N}$ rows and $N\alpha_{\ell,N}$ columns.

Numerical simulations suggest that the average value and the variance 
of the entries in each block, subject to proper normalization and
the relative size of the blocks 
converge to a deterministic limit as $N \to \infty$.
These limits do not depend on the distribution of the $J_{i,j}$'s,
as long as they have expected value zero and variance $1$.
More precisely we make the following
\begin{conjecture}\label{thm:blocks_features}
    Let $J_{i,j}$ be independent identically distributed random
    variables with $\expect*{J_{1,1}} = 0$ and
    $\variance*{J_{1,1}} = 1$. Then there exist constants
    $\alpha_{1}, \alpha_{2}, \alpha_{3}, \alpha_{4}$ such that
    \begin{enumerate}
        \item $\lim\limits_{N\to\infty} \alpha_{1,N} = \alpha_{1}$,\phantom{x}
              $\lim\limits_{N\to\infty} \alpha_{2,N} = \alpha_{2}$ \phantom{x}
              $\lim\limits_{N\to\infty} \alpha_{3,N} = \alpha_{3}$ \phantom{x} 
              $\lim\limits_{N\to\infty} \alpha_{4,N} = \alpha_{4}$
        \item $\alpha_{1} = \alpha_{2} = \alpha_{3} 
                    = \frac{1}{2}\alpha_{\min}
                    = \frac{1}{2}\alpha_{\max}$; 
                    \phantom{x} $\alpha_{4} = 1 - \frac{3}{2}\alpha_{\min}$ 
        \item $\variance*{J[k, \ell]_{i,j}} = 1$ for $k, \ell = 1, \ldots, 4$
    \end{enumerate}
\end{conjecture}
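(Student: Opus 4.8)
The plan is to pass to the Ising representation, where the structure becomes transparent, and then to combine a symmetry argument with the universality already established in Theorem~\ref{thm:universality_min_max}. Setting $s_i = 2\eta_i - 1 \in \{-1,1\}$ and using $\eta_i\eta_j = \tfrac14(1 + s_i + s_j + s_is_j)$, one rewrites $H(J,\eta) = \frac{W}{4}\sum_{i,j}J_{i,j}s_is_j + \frac{W}{4}\sum_i s_i\bigl(\sum_j (J_{i,j}+J_{j,i})\bigr) + \frac{W}{4}\sum_{i,j}J_{i,j}$. With $W = N^{-1/2}$ the first term is an SK Hamiltonian, the second is a linear coupling to site-dependent, disorder-correlated random fields of order $O(1)$, and the constant is $O(\sqrt N)$, hence negligible per particle. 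Thus QUBO is, to leading order in $N$, an SK model in a random field, and all four quantities $\alpha_{k,N}$ are encoded in the two magnetizations $\bar s^{\min}_N = 2\alpha_{\min,N}-1$, $\bar s^{\max}_N = 2\alpha_{\max,N}-1$ and the ground-state overlap $q_N = \frac1N\sum_i s^{\min}_i s^{\max}_i$; indeed $\alpha_{2,N} = \tfrac14(1 + \bar s^{\min}_N + \bar s^{\max}_N + q_N)$, and the remaining $\alpha_{k,N}$ follow from Remark~\ref{thm:alpha_as_sum_of_alpha_i}.

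First I would establish item (1), the existence of the limits. For Gaussian couplings the limiting free energy of the tilted Hamiltonian $H(J,\eta) - \lambda|\eta|$ exists and is given by a Parisi-type formula by the results of \cite{panchenko2005generalized, panchenko2018mixed}; differentiating in $\lambda$ (an envelope/Griffiths argument) and sending $\beta\to\infty$ yields the existence of $\alpha_{\min}=\lim\alpha_{\min,N}$, and an analogous two-replica tilt, coupling a source to $\frac1N\sum_i s^{\min}_i s^{\max}_i$, yields the existence of the limiting overlap and hence of each $\alpha_k$. To remove the Gaussian assumption I would upgrade Theorem~\ref{thm:universality_min_max} to these tilted and two-replica Hamiltonians: the interpolation/Lindeberg bound proved there is insensitive to the added linear and source terms, so the limiting values are the same for every mean-zero, variance-one law. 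This delivers item (1) together with the distribution-independence asserted in the conjecture.

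For item (2), the symmetry $J \mapsto -J$ sends $H$ to $-H$ and so exchanges minimizer and maximizer: $\eta^{\min}(-J) = \eta^{\max}(J)$. When the common law of the $J_{i,j}$ is symmetric (in particular Gaussian) one has $-J \overset{d}{=} J$, so the pair $(\eta^{\min},\eta^{\max})$ is exchangeable; this gives $\alpha_{\min}=\alpha_{\max}$ and $\alpha_1 = \alpha_3$ in the limit, and universality then removes the symmetry hypothesis. With $\alpha_{\min}=\alpha_{\max}$ in hand, the only remaining content of item (2) is the single overlap identity $q = -(2\alpha_{\min}-1)$, which is exactly equivalent to $\alpha_1 = \alpha_2$ by the identity for $\alpha_{2,N}$ recorded above. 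I expect \emph{this} to be the main obstacle: it is a genuinely zero-temperature, structural statement about two extremal ground states, and I would attack it through the joint Parisi variational problem for the pair, trying to show that the anti-correlation forced by minimizing one copy while maximizing the other drives the saddle-point overlap precisely to $-(2\alpha_{\min}-1)$. Controlling the $\beta\to\infty$ limit of this coupled variational problem, and ruling out a continuum of competing overlaps, is where the difficulty concentrates; the numerics of Section~\ref{sec:numerical_results} are consistent with, but do not by themselves establish, the exact value.

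Finally, item (3) I would obtain by a cavity / leave-one-out decoupling. Fix indices $i,j$ and resample only the single entry $J_{i,j}$. Because each local field $h_i = N^{-1/2}\sum_j \tilde J_{i,j}\eta_j$ is an average of $N$ comparable terms, such a resampling perturbs every local field by $O(N^{-1/2})$ and hence, away from a negligible set of near-degenerate instances, does not change the optimizers $\eta^{\min},\eta^{\max}$. Consequently the membership events $\{i \in I_k\}$ and $\{j \in I_\ell\}$ become asymptotically independent of $J_{i,j}$, so the conditional law of $J_{i,j}$ given $i\in I_k, j\in I_\ell$ converges to its unconditional law and $\variance*{J[k,\ell]_{i,j}} \to 1$. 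Making the ``does not change the optimizer'' step quantitative — bounding, uniformly in $N$, the probability that a single-entry perturbation flips a coordinate of a ground state — is the delicate point, and it is closely tied to the same ground-state stability estimates needed for the overlap identity in item (2).
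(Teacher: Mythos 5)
This statement is Conjecture~\ref{thm:blocks_features} of the paper: the authors do not prove it, and support it only with simulations (block statistics at $N=1024$ averaged over $10^4$ Gaussian realizations, plus diluted, exponential and benchmark instances). There is therefore no proof in the paper to compare yours against; the relevant question is whether your argument would actually establish the conjecture. It would not. Its correct content is a set of exact, elementary reductions: the Ising rewriting of $H$ as an SK term plus disorder-correlated fields of order one plus an $\eta$-independent $O(\sqrt{N})$ term; the identity $\alpha_{2,N}=\tfrac14\bigl(1+\bar s^{\,\min}_N+\bar s^{\,\max}_N+q_N\bigr)$; the resulting equivalence of item (2) with the single overlap identity $q=-(2\alpha_{\min}-1)$; and the $J\mapsto -J$ exchangeability argument giving $\alpha_{\min}=\alpha_{\max}$ and $\alpha_{1}=\alpha_{3}$ for symmetric laws. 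These reductions organize the conjecture nicely, but every step that carries analytic weight is left open, as you yourself concede.

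Three gaps in particular. First, for item (1) you invoke a Parisi-type formula for a tilted, two-replica system in which one replica is minimized and the other maximized over the \emph{same} disorder; no such formula exists in \cite{panchenko2005generalized} or \cite{panchenko2018mixed} (these treat the free energy of a single system), and your envelope step additionally needs differentiability of the limiting tilted free energy at zero source, which is precisely what can fail when the overlap is not self-averaging. Second, you use universality to remove the Gaussian and symmetry hypotheses, but Theorem~\ref{thm:universality_min_max} controls only $\expect*{\min_{\eta} H}$ and $\expect*{\max_{\eta} H}$; universality of the \emph{structure} of the optimizers (the $\alpha_{k}$'s and the overlap) is itself part of what is being conjectured, and a Lindeberg bound on free energies does not transfer to argmin/argmax structure without exactly the missing differentiability input. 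Third, item (3) rests on the claim that resampling a single entry $J_{i,j}$ leaves $\eta^{\min}$ and $\eta^{\max}$ unchanged outside a negligible event; this is a ground-state stability (disorder-chaos) statement that is unproven for SK-type models, and the paper's own numerics show why it is delicate: the conditional block means are nonzero of order $N^{-1/2}$ (e.g.\ $\mu[1,2]\approx -0.057$ at $N=1024$), so conditioning on block membership \emph{does} bias the single coupling at every finite $N$ by just the amount that sums to a macroscopic energy, and any rigorous version of your decoupling must separate this real mean shift of order $N^{-1/2}$ from the variance shift, which has to be shown to vanish. As it stands, your proposal is a sensible research program that points at genuinely open problems, not a proof.
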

Further, set $\mu[k, \ell] = \expect*{J[k,\ell]_{i,j}}$ and write the covariance of
$J[k,\ell]_{i}, J[k, \ell]_{j}$ as 
\[\covariance*{J[k,\ell]_{i}, J[k, \ell]_{j}} 
= \frac{N^{2}\alpha_{k}\alpha_{\ell}}{2}(1 - \tilde{\sigma}[k,\ell])\].

Computation of averages over 10000 realizations of $J$ with standard
normal distribution and $N = 1024$ yielded the following estimates 
for $\alpha_{i}$, $\mu[k,\ell]$ and $\tilde{\sigma}[k,\ell]$.
\begin{gather}
    \vec{\alpha} = \begin{bmatrix}\begin{tabular}{@{} S[table-format=2.6] @{}}
    0.3105 \\
    0.31331 \\
    0.31054 \\
    0.06565 \\
    \end{tabular}
    \end{bmatrix}
    \\
    \mu[k,\ell] = \begin{bmatrix}\begin{tabular}{@{}*{4}{S[table-format=3.5]}@{}}
        -0.02161 & -0.05687 & 0.00006 & 0.04229 \\
        -0.05687 & 0.00001 & 0.05684 & 0.00013 \\
        -0.00005 & 0.05685 & 0.02161 & -0.04241 \\
        0.04228 & 0.00003 & -0.04227 & 0.00001 \\
        \end{tabular}
    \end{bmatrix}
    \\
    \tilde{\sigma}[k,\ell] = \begin{bmatrix}\begin{tabular}{@{}*{4}{S[table-format=2.6]}@{}}
        0.92996 & 1.11006 & 0.74334 & 1.20509 \\
        1.06221 & 0.58133 & 1.08835 & 0.77939 \\
        0.73879 & 1.08343 & 0.9415 & 1.25195 \\
        1.21106 & 0.76959 & 1.2258 & 0.99425 \\
    \end{tabular}
\end{bmatrix}
\end{gather}
These values suggest that the random couplings in blocks 
$[2, 2]$, $[3, 1]$, $[4, 2]$, $[1, 3]$,  and $[2, 4]$ 
are negatively correlated,
the random couplings in blocks
$[1, 4]$, $[3, 4]$, $[4, 1]$, and $[4, 3]$ are positively correlated,
whereas the random couplings in the remaining blocks
are roughly independent.

Very similar values can be obtained in the diluted case and for $J_{i,j}$
with distributions other than the normal.

Note that it is always possible to
relabel the indices $1, 2, \ldots, N$
so that, in $J$, indices in  $I_{1}$ appear ``first'', 
indices in $I_{2}$ appear  ``second'', 
in $I_{3}$ appear ``third'' and 
indices in $I_{4}$ appear ``last''.
With this relabelling, a graphical representation of 
Conjecture~\ref{thm:blocks_features} is provided in Fig.~\ref{fig:napkin01}.

\begin{figure}[H]
    \centering
        \begin{subfigure}[t]{0.23\textwidth}
            \includegraphics[width=\textwidth]{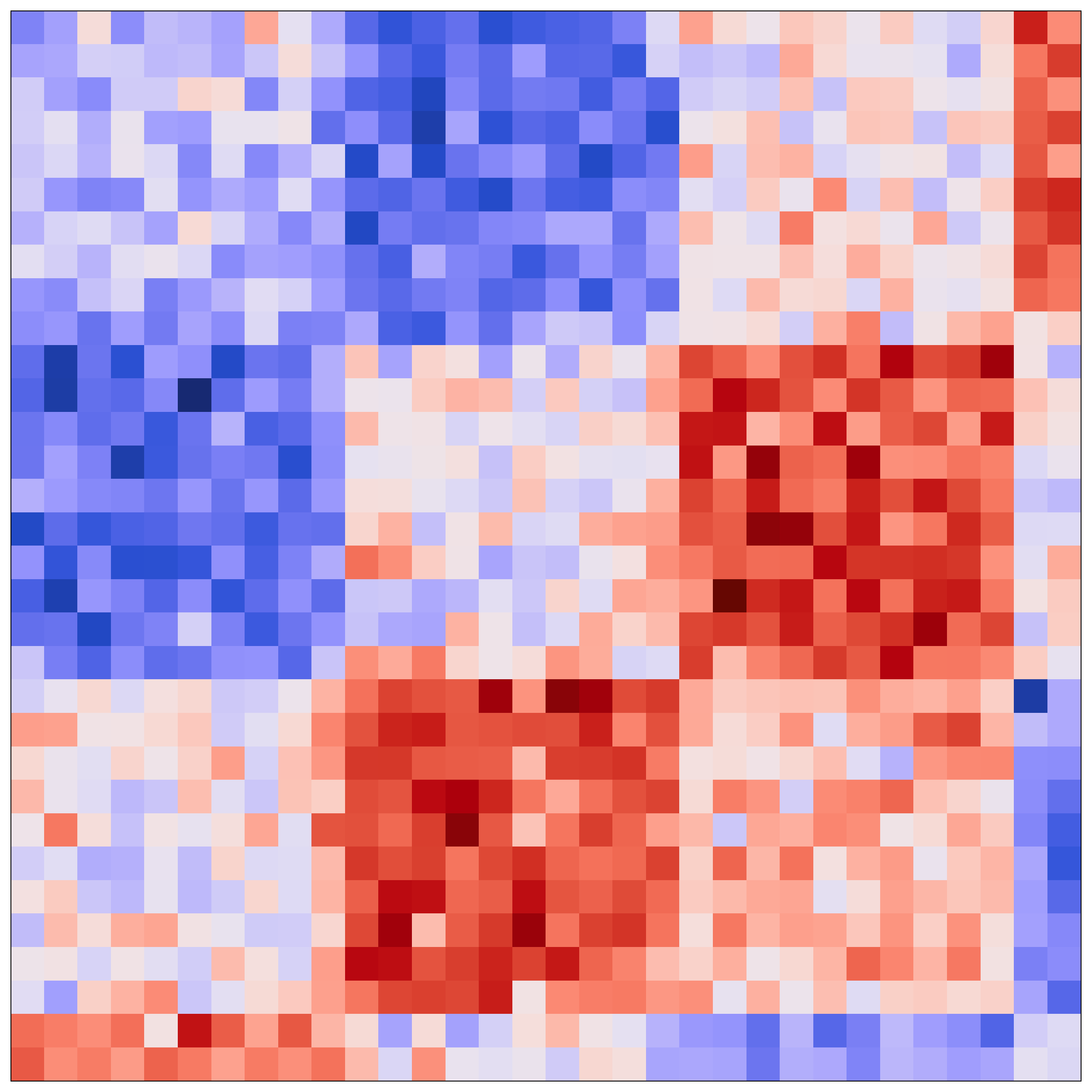}
            \caption{$\delta = 2; \; N = 4096, \; L = 128 $ \; $J_{i,j}$ normally distributed}
        \end{subfigure}
        \hfill
        \begin{subfigure}[t]{0.23\textwidth}
            \includegraphics[width=\textwidth]{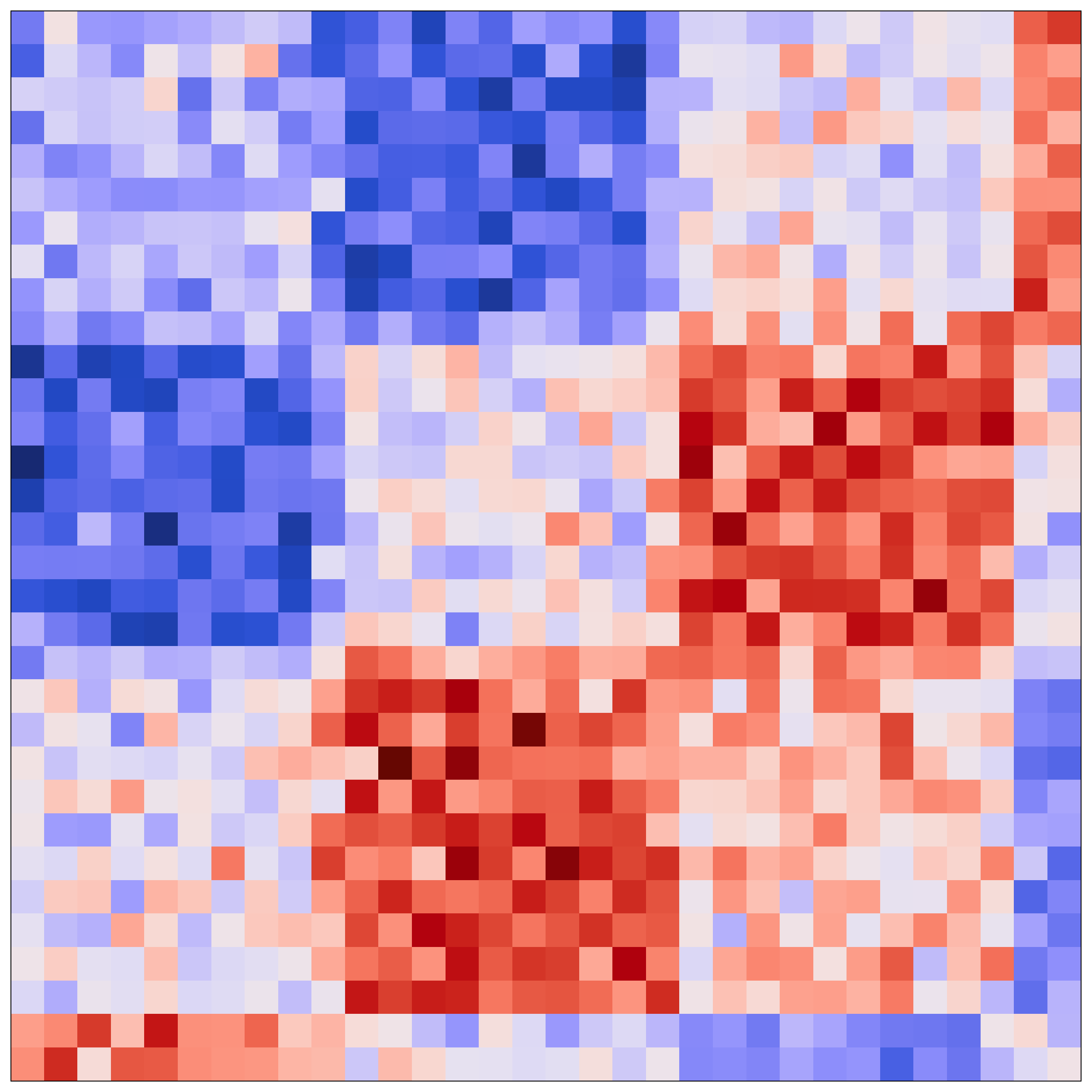}
            \caption{$\delta = 1.4 \; (\rho \approx 0.007); N = 4096, \; L = 128 $
                    $J_{i,j}$ normally distributed}
        \end{subfigure}
        \hfill
        \begin{subfigure}[t]{0.23\textwidth}
            \includegraphics[width=\textwidth]{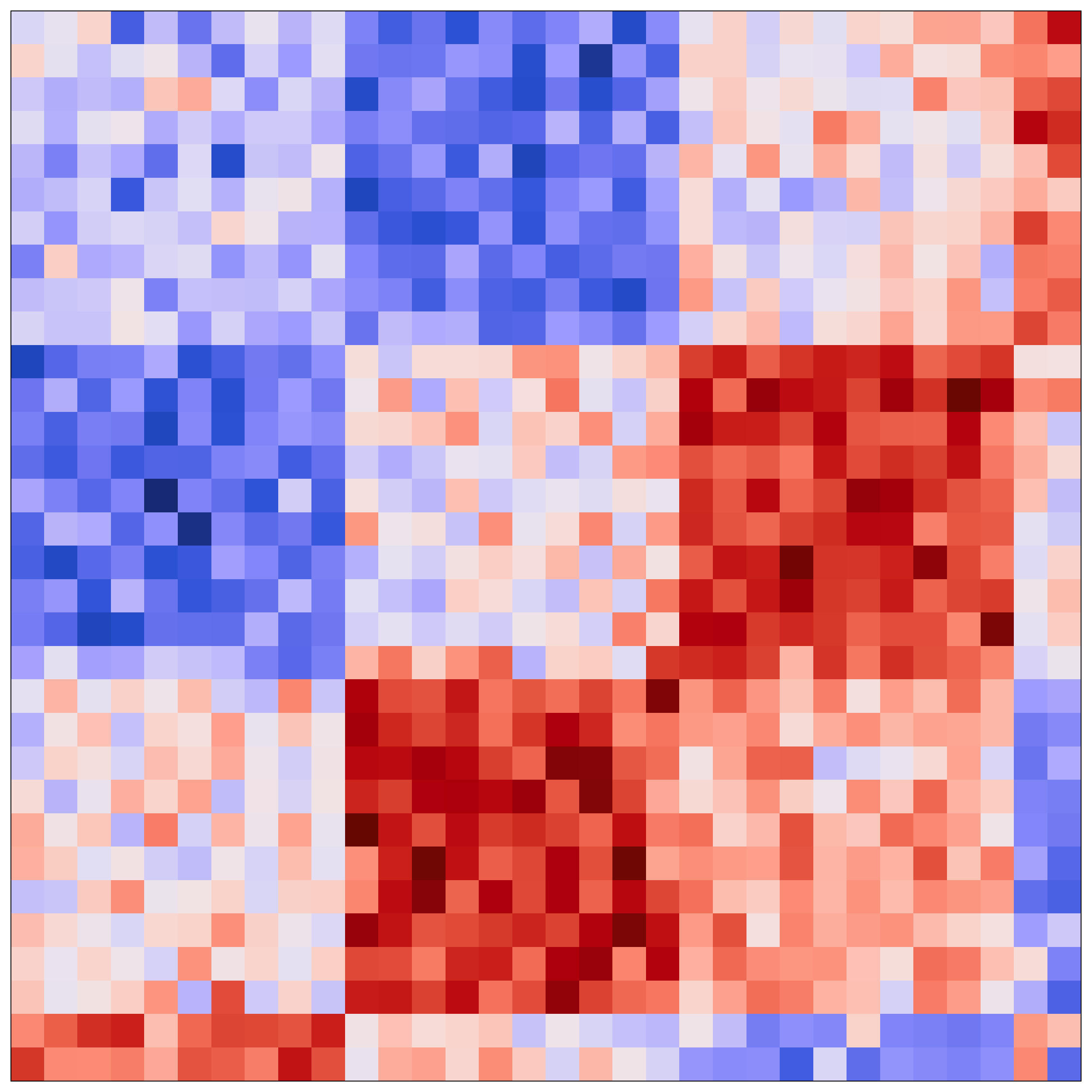}
            \caption{$\delta = 2; \; N = 4096, \; L = 128$  $J_{i,j}$ exponentially distributed}
        \end{subfigure}
        \hfill
        \begin{subfigure}[t]{0.23\textwidth}
            \includegraphics[width=\textwidth]{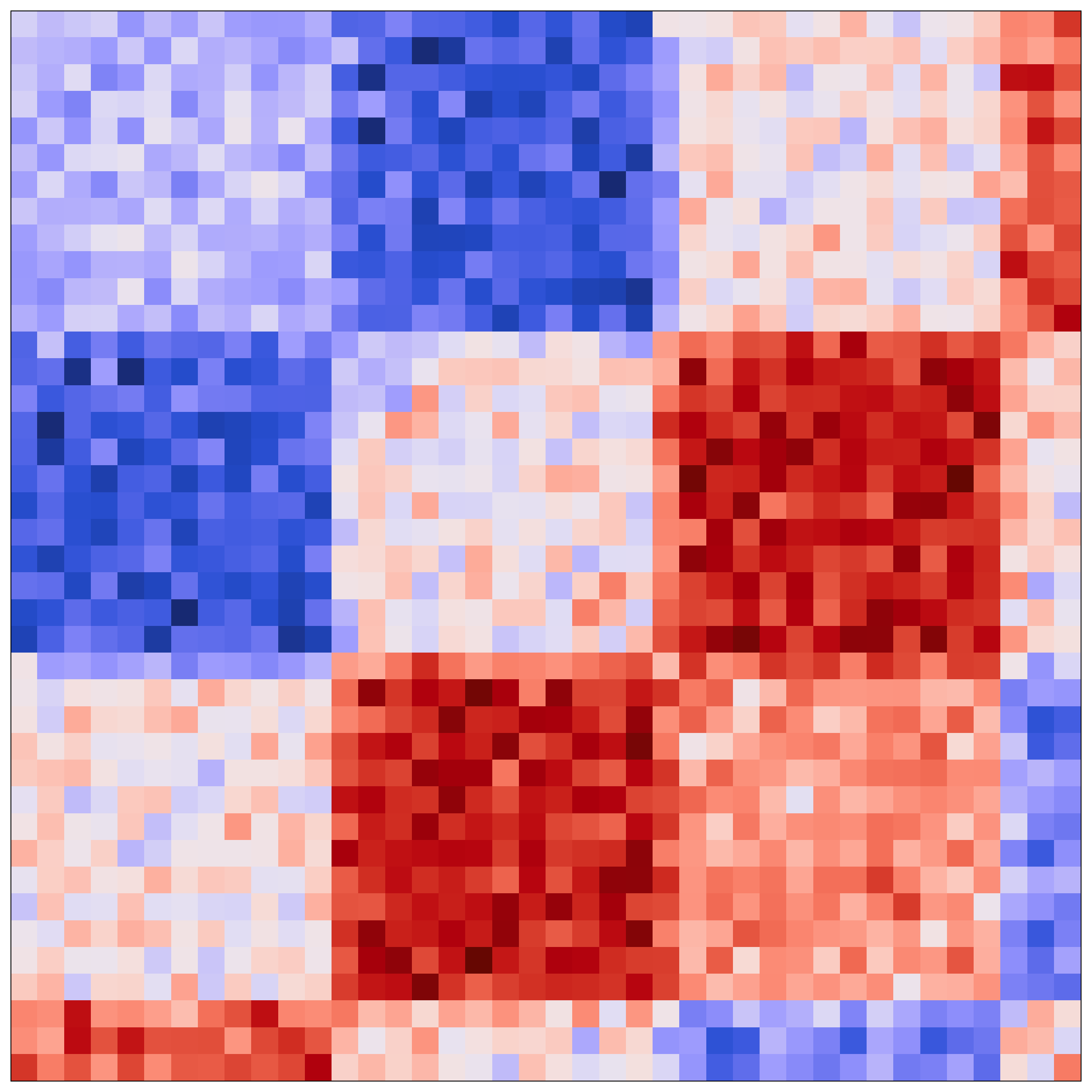}
            \caption{$\delta = 2; \; N = 7000, \; L = 175$  $J$ = p7000}
        \end{subfigure}
        \caption{Graphical representation of the matrix of the
        couplings. These matrices are obtained from $J$ by rearranging
        the rows (columns) of $J$ so that rows (columns) in $I_{1}$
        appear ``first'' rows (columns) $I_{2}$ appear ``second'',
        rows (columns) $I_{3}$ appear ``third'' and rows (columns)
        $I_{4}$ appear ``last''. Pictures are obtained by averaging
        the values of the $J_{i,j}$ over squares of size $L$. Negative
        (average) values of $J$ are blue whereas positive values are
        red; darker colors correspond to larger absolute values.}
        \label{fig:napkin01}
\end{figure}

\subsection{Probability a particle belongs to the minimizer/maximizer of $H$}\label{sec:prob_i_in_min_max}

Sets $I_{1}, \ldots I_{4}$ introduced above are random
sets depending on the realization of $J$.
The problem of finding $\eta^{\min}$ and $\eta^{\max}$
can be restated as the problem of determining
the sets $I_{\min} = I_{1} \cup I_{2}$ and
$I_{\max} = I_{2} \cup I_{3}$.
Though, as already mentioned, this problem is NP-hard,
we tried to assess, numerically, whether it is possible to
determine the probability that a certain index $i$
belongs to either $I_{\min}$ or $I_{\max}$.

To this aim, consider the matrix $\tilde{J} = \frac{J + J'}{2}$,
that is the symmetrized version of $J$ and the function
$\tilde{H}$ obtained from $H$ by replacing $J$ with $\tilde{J}$.
Note that $H(J, \eta) = \tilde{H}(\eta)$ for all $\eta$ and,
consequently, the values of $\eta^{\min}_{i}$ and $\eta^{\max}_{i}$ do not
depend on whether matrix ${J}$ or $\tilde{J}$ is considered. 

Take the matrix $\tilde{J}$ and define
$R = \sum_{j = 1}^{N} J_{i, j}$, that is
$R$ is the vector of the sums of the rows of $\tilde{J}$.
Let $[i]$ be the index of the $i$-th smallest element of $R$.
We say that the $[i]$-th row (column) of $\tilde{J}$
belongs to the minimizer (respectively the maximizer)
of $H$ if $\eta^{\min}_{[i]} = 1$ (respectively $\eta^{\max}_{[i]} = 1$).

Numerical simulations show that the probability that the
$[i]$-th row of $\tilde{J}$ belongs to the minimizer (respectively maximizer)
of $H$ is a decreasing (respectively increasing) deterministic  function of $\frac{[i]}{N}$.
This function is expected not to depend on the actual distribution of $J$.
Further, we expect that a positive fraction of the smallest (largest)
rows of $\tilde{J}$ (where smallest/largest refers to the sum of the elements
on each row of $\tilde{J}$) to belong to the minimizer (maximizer)
of $H$ with positive probability (see Fig.~\ref{fig:ith_row_in_etamax}).
More precisely we have the following
\begin{conjecture}\label{thm:prob_row_in_min_max}
    As $N \to \infty$, 
    $\proba*{\eta^{\min}_{[i]} = 1 } = f_{\min}\left(\frac{[i]}{N}\right) + o(1)$
    with $f_{\min}$ a decreasing function.
    Similarly, as $N \to \infty$, $\proba*{\eta^{\min}_{[i]} = 1 } = f_{\max}\left(\frac{[i]}{N}\right) + o(1)$ as , with
    $f_{\max}\left(\frac{[i]}{N}\right) = f_{\min}\left(1 - \frac{[i]}{N}\right)$.
    Moreover, there exists $\lambda_0 > 0$ such that $f_{\min} (\lambda) = 1$ for all
    $\lambda < \lambda_0$.
    Finally, if the tails of $J_{i,j}$ decay sufficiently fast,
    the function $f$ does not depend on the distribution of $J_{i,j}$
\end{conjecture}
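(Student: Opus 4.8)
The plan is to prove the conjecture first for Gaussian couplings and then transfer it to arbitrary mean-zero, unit-variance distributions by the universality machinery underlying Theorem~\ref{thm:universality_min_max}. The natural object to control is not the rank directly but the row sum itself. Writing $R_i = \sum_j \tilde{J}_{i,j}$ and recalling $W = 1/\sqrt{N}$, the rescaled sums $R_i/\sqrt{N}$ obey a central limit theorem (each converging to a centred Gaussian of variance $1/2$) and their empirical distribution concentrates, so that the order statistic $R_{[i]}/\sqrt{N}$ converges to the Gaussian quantile $\Phi^{-1}([i]/N)$. It therefore suffices to study the conditional occupation probability $g(r) := \lim_N \proba*{\eta^{\min}_1 = 1 \mid R_1 = r\sqrt{N}}$ and to set $f_{\min}(\lambda) = g(\Phi^{-1}(\lambda))$; monotonicity of $f_{\min}$ in $\lambda$ is then equivalent to monotonicity of $g$ in $r$, and the plateau $f_{\min}\equiv 1$ near $0$ corresponds to $g(r)=1$ for $r$ below a threshold $r_0$.

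First I would establish existence and universality of $g$. Introduce the perturbed Hamiltonian $H(J,\eta) - \epsilon \sum_i u(R_i/\sqrt{N})\eta_i$ for a test function $u \in C^3$ with bounded derivatives, together with its finite-$\beta$ free energy $F_\beta(\epsilon)$. Differentiating at $\epsilon = 0$ produces the Gibbs average $\expect*{u(R_1/\sqrt{N})\,\average{\eta_1}{\beta}}$; letting $\beta \to \infty$ along the same schedule used to deduce Theorem~\ref{thm:universality_min_max} from its free-energy counterpart, and invoking the envelope theorem together with the concentration of Theorem~\ref{thm:min_max_self_averaging}, recovers $\expect*{u(R_1/\sqrt{N})\,\eta^{\min}_1}$. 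Running the Lindeberg/interpolation swap of Theorem~\ref{thm:universality_min_max} on $F_\beta(\epsilon)$ shows this quantity is distribution-independent up to an error governed by $\gamma$ and the bounds on $u$; the only new feature is that a swap of a single entry also perturbs $u(R_i/\sqrt{N})$, which is controlled by the smoothness of $u$. Ranging over $u$ identifies the joint law of $(R_1/\sqrt{N},\eta^{\min}_1)$, hence $g$, and shows it to be universal, which already settles the last sentence of the conjecture.

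The symmetry $f_{\max}(\lambda) = f_{\min}(1-\lambda)$ is then almost free. Under $J \mapsto -J$ one has $H(-J,\eta) = -H(J,\eta)$, so $\eta^{\min}(-J) = \eta^{\max}(J)$ coordinatewise, while $R_i(-J) = -R_i(J)$ reverses every rank, sending $[i]$ to $[N+1-i]$. For Gaussian (indeed any symmetric) couplings $-J$ is equal in law to $J$, which yields $f_{\min}(\lambda) = f_{\max}(1-\lambda)$ at once, and the universality just established extends the identity to all admissible distributions.

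The substance of the conjecture is the monotonicity and the plateau, and here I expect the real difficulty. The heuristic is a cavity/field-shift computation: conditioning the Gaussian ensemble on $R_1 = r\sqrt{N}$ shifts each $\tilde{J}_{1,j}$ by $r/\sqrt{N}$, adding to the energy the non-negative penalty $\tfrac{2r}{\sqrt{N}}\,\eta_1\sum_{j\ne 1}\eta_j \approx 2r\alpha_{\min}\,\eta_1$ against turning site $1$ on, so that a positive thermodynamic susceptibility of $\average{\eta_1}{}$ to its own field would force $g$ to be non-increasing in $r$. The obstruction to rigour is that, because the signs of $J$ are disordered, the energy is not submodular and the minimiser enjoys no coordinatewise monotonicity in $r$ sample by sample; one must instead prove monotonicity of the \emph{averaged} response, which in this model requires the full Panchenko-type cavity analysis and control of the (presumably replica-symmetry-broken) overlap structure. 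The plateau is sharper still: at a fixed quantile $r < r_0$ the induced penalty is of the same $O(1)$ order as the fluctuations of the cavity field, so $g(r)=1$ cannot arise from naive averaging and must reflect the \emph{adaptive} correlation between row $1$ and the optimal support $I_{\min}$, which preferentially collects the negative entries of that row. I would attack it through a local-stability (exchange) argument, showing that if such a row were off then a single-site or pair move lowers the energy with probability tending to one; this in turn needs a lower bound on the density of the ground-state field distribution near zero and a conditioned refinement of the first-moment estimate behind Corollary~\ref{thm:almost_sure_bound_for_min_and_max}. Establishing this deterministic forcing for a positive fraction $\lambda_0 > 0$ of the extreme rows is, I expect, the principal obstacle.
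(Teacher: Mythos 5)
You should first be aware that the paper contains no proof of this statement: it is presented as Conjecture~\ref{thm:prob_row_in_min_max}, supported only by the numerical evidence of Fig.~\ref{fig:ith_row_in_etamax}, so there is nothing in the paper to compare your argument against. Judged on its own terms, your proposal is a research programme rather than a proof, and it has genuine gaps at exactly the points that make the statement a conjecture. The parts that are essentially sound are the reduction from ranks to row sums (empirical quantile convergence of $R_{[i]}/\sqrt{N}$) and the symmetry argument: $H(-J,\eta)=-H(J,\eta)$ swaps minimizer and maximizer while reversing the ranks, giving $f_{\max}(\lambda)=f_{\min}(1-\lambda)$ for sign-symmetric couplings, which is clean and correct. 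But even your ``universality'' step is not complete as described: extracting $\expect*{u(R_1/\sqrt{N})\,\eta^{\min}_1}$ from the perturbed free energy $F_\beta(\epsilon)$ requires differentiating a limit of free energies at $\epsilon=0$ and interchanging the limits $N\to\infty$, $\beta\to\infty$, $\epsilon\to 0$. In spin-glass models this envelope/differentiation step is only justified where the limiting free energy is differentiable in $\epsilon$ (generically a.e.\ by convexity, but not automatically at $\epsilon=0$), and without it you have not even established that the limit $g(r)$, hence $f_{\min}$, exists. Note also that the existence of the limit is part of what is being conjectured; the paper itself only has existence results (via \cite{panchenko2005generalized}) for global quantities like $m_{\min,N}$, not for single-site conditional occupation probabilities.

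The decisive gap, which you yourself flag, is that the two substantive claims --- monotonicity of $f_{\min}$ and the plateau $f_{\min}(\lambda)=1$ for $\lambda<\lambda_0$ --- are not proved but replaced by heuristics: a field-shift argument whose required ``positive averaged susceptibility'' is exactly what a disordered, non-submodular energy does not give you for free, and an exchange-stability argument for the plateau that would need a conditioned refinement of the first-moment bound behind Corollary~\ref{thm:almost_sure_bound_for_min_and_max} together with control of the ground-state cavity-field distribution, none of which is carried out. So the proposal, while a reasonable and well-motivated plan of attack whose universality component is in the same spirit as the Lindeberg argument of Theorem~\ref{thm:universality_min_max}, does not constitute a proof of the statement; it leaves open precisely the assertions that the paper also could not prove.
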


\begin{figure}[H]
    \centering
            \includegraphics[width=0.78\textwidth]{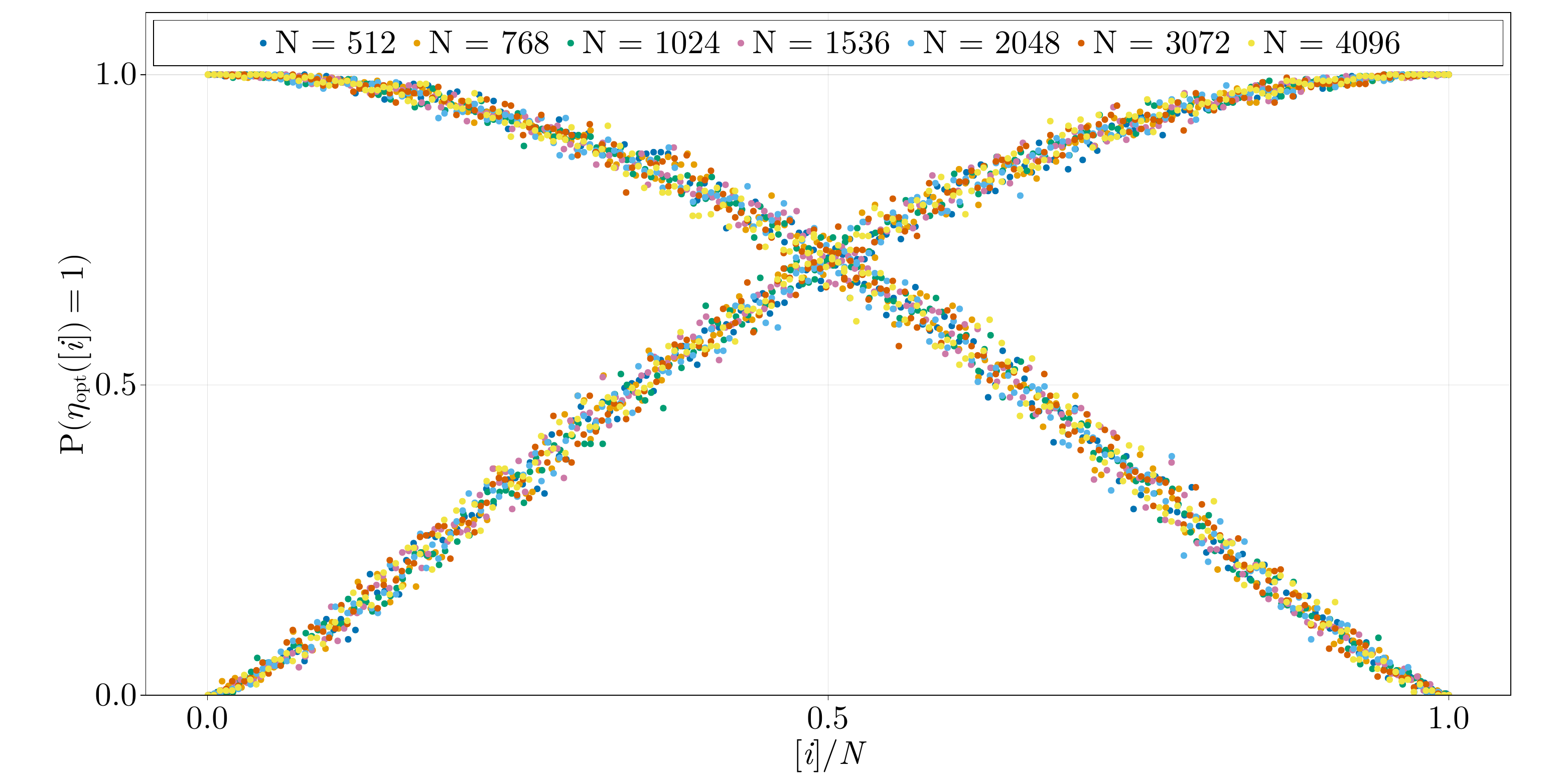}
        \caption{Probability the $[i]$-th row of $\tilde{J}$ belongs to the minimizer
                (the increasing cloud of points) and to the maximizer 
                (the decreasing cloud of points) of $H$ for
                several values of $N$. 
                Estimates are obtained as averages 
                over 400 realization of $J$
                for each value of $N$.
                The shape of the cloud appears to
                be independent of the size of the system.}
        \label{fig:ith_row_in_etamax}
\end{figure}

Finally, we observe that the random variables 
$\mathbbm{1}_{\{\eta^{\min}_{[i]} = 1\}}$
and $\mathbbm{1}_{\{\eta^{\max}_{[i]} = 1\}}$ appear to be positively correlated
for all $i$. The qualitative behavior of the strength of the correlation
between these two variables as a function of $[i]$ is provided by the
estimates of Fig.~\ref{fig:i_in_min_and_max_correlation}
\begin{figure}[H]
    \centering
            \includegraphics[width=0.78\textwidth]{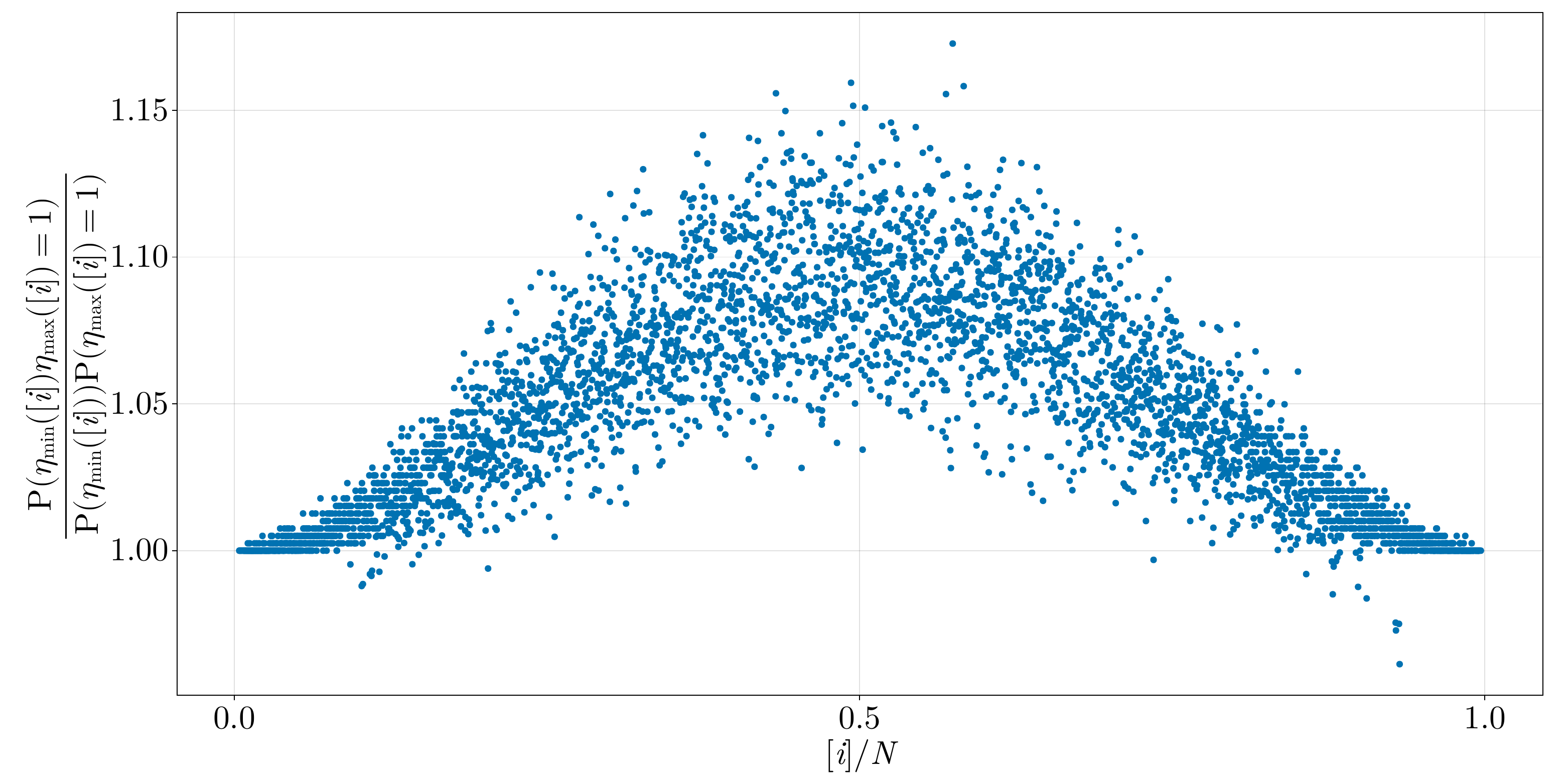}
        \caption{Ratio between the probability that the $[i]$-th belongs to both
                the minimizer and the maximizer of $H$ and the product of the
                probability of each of the two events for $N = 4096$.
                Estimates are obtained as averages 
                over 400 realizations of $J$
                for each value of $N$.}
        \label{fig:i_in_min_and_max_correlation}
\end{figure}

\section{Proofs}\label{sec:proofs}
We state first some general results that will be used
below in the proofs of 
Theorem~\ref{thm:min_max_self_averaging} and 
Theorem~\ref{thm:universality_min_max}.

Let $X = \{X_{1}, \ldots, X_{n}\}$ be independent random variables
and let $W=g\left(X_1, \ldots, X_i, \ldots, X_n\right)$
with $g$ a measurable function.
Further, 
let $X^{\prime} = \{X_{1}^{\prime}, \ldots, X_{n}^{\prime}\}$ be an independent copy of 
$X_{1}, \ldots, X_{n}$ and write
\begin{equation}
    W_{i}=g\left(X_{1}, \ldots, X_{i}^{\prime}, \ldots, X_{n}\right)
\end{equation}

Define the random variable $V$ by
\begin{equation}
    V=\mathbb{E}\left[\sum_{i=1}^n\left(W - W_{i}\right)^{2} \mid {X}\right] 
\end{equation}
which allows to re-state Efron-Stein's theorem (see \cite{efron1981jackknife})
as follows
\begin{theorem}[Efron-Stein] \label{thm:efronstein}
    \begin{equation}
        \variance*{W} \leq \frac{1}{2} \expect*{V}
    \end{equation}
\end{theorem}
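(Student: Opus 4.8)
The plan is to prove the bound by the classical Doob martingale decomposition, after first recasting the right-hand side as a sum of conditional variances. Write $\operatorname{Var}_i$ for the variance taken over $X_i$ alone, with all the other coordinates held fixed. Conditionally on the collection $X^{(i)} := (X_1, \ldots, X_{i-1}, X_{i+1}, \ldots, X_n)$, the quantities $W$ and $W_i$ are i.i.d.\ copies, being the same measurable function of $X_i$ and of its independent replacement $X_i'$ respectively. Hence $\expect*{(W - W_i)^2 \mid X^{(i)}} = 2\operatorname{Var}_i(W)$, and summing over $i$ and taking expectations gives $\tfrac{1}{2}\expect*{V} = \sum_{i=1}^n \expect*{\operatorname{Var}_i(W)}$. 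It therefore suffices to establish the inequality in the form $\variance*{W} \le \sum_{i=1}^n \expect*{\operatorname{Var}_i(W)}$.

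For this, I would introduce the filtration $\mathcal{F}_i = \sigma(X_1, \ldots, X_i)$ (with $\mathcal{F}_0$ trivial) and the Doob martingale $\expect*{W \mid \mathcal{F}_i}$, whose increments $\Delta_i := \expect*{W \mid \mathcal{F}_i} - \expect*{W \mid \mathcal{F}_{i-1}}$ telescope to $W - \expect*{W}$. Since martingale differences are orthogonal in $L^2$, all cross terms vanish and $\variance*{W} = \sum_{i=1}^n \expect*{\Delta_i^2}$. The whole argument then reduces to the termwise bound $\expect*{\Delta_i^2} \le \expect*{\operatorname{Var}_i(W)}$.

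To obtain that bound I would rewrite the increment using independence. Let $\mathbb{E}^{(i)}$ denote integration over $X_i$ alone, so $\mathbb{E}^{(i)} W$ is a function of $X^{(i)}$. The tower property gives $\expect*{W \mid \mathcal{F}_{i-1}} = \expect*{\mathbb{E}^{(i)} W \mid \mathcal{F}_{i-1}}$, and independence lets me enlarge the conditioning to $\mathcal{F}_i$ at no cost, because $\mathbb{E}^{(i)} W$ does not depend on $X_i$ and $X_i$ is independent of $X^{(i)}$; hence $\expect*{W \mid \mathcal{F}_{i-1}} = \expect*{\mathbb{E}^{(i)} W \mid \mathcal{F}_i}$ and so $\Delta_i = \expect*{W - \mathbb{E}^{(i)} W \mid \mathcal{F}_i}$. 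Conditional Jensen then yields $\Delta_i^2 \le \expect*{(W - \mathbb{E}^{(i)} W)^2 \mid \mathcal{F}_i}$, and taking expectations, together with the fact that $\mathbb{E}^{(i)} W$ is exactly the conditional mean entering $\operatorname{Var}_i(W)$, gives $\expect*{\Delta_i^2} \le \expect*{(W - \mathbb{E}^{(i)} W)^2} = \expect*{\operatorname{Var}_i(W)}$. Summing over $i$ closes the argument.

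The main obstacle is the bookkeeping behind the identity $\expect*{W \mid \mathcal{F}_{i-1}} = \expect*{\mathbb{E}^{(i)} W \mid \mathcal{F}_i}$: this is precisely the step where the independence of the $X_j$ is indispensable, and one must argue carefully both that integrating out $X_i$ commutes with conditioning on $\mathcal{F}_{i-1}$ (tower property) and that adjoining $X_i$ to the conditioning leaves the value unchanged (independence). Everything else — orthogonality of martingale increments and conditional Jensen — is routine.
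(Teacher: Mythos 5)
Your proof is correct, but note that the paper does not actually prove this statement: Theorem~\ref{thm:efronstein} is quoted from the literature (Efron--Stein~\cite{efron1981jackknife}) and used as a black box, so there is no internal argument to compare yours against. What you supply is the classical self-contained proof, and every step checks out: the identity $\tfrac12\mathbb{E}[V]=\sum_{i=1}^n\mathbb{E}\left[\operatorname{Var}_i(W)\right]$ follows exactly as you say, because conditionally on $X^{(i)}$ the variables $W$ and $W_i$ are i.i.d., so $\mathbb{E}\left[(W-W_i)^2\mid X^{(i)}\right]=2\operatorname{Var}_i(W)$; the Doob decomposition gives $\operatorname{Var}(W)=\sum_i\mathbb{E}\left[\Delta_i^2\right]$ by orthogonality of the increments; and the key identity $\mathbb{E}\left[W\mid\mathcal{F}_{i-1}\right]=\mathbb{E}\left[\mathbb{E}^{(i)}W\mid\mathcal{F}_i\right]$, which is where independence enters, combined with conditional Jensen, yields the termwise bound $\mathbb{E}\left[\Delta_i^2\right]\le\mathbb{E}\left[\operatorname{Var}_i(W)\right]$. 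The only hypothesis you use implicitly and should state is $W\in L^2$ (otherwise both sides may be infinite and the martingale argument needs care), which is also implicit in the statement itself. In the context of the paper your argument buys self-containedness at essentially no cost, since it relies only on the independence structure already assumed there.
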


From \cite{boucheron2003concentration}, we can bound the moment
generating function with the following
\begin{theorem}\label{thm:mgf}
For all $\theta>0$ and $\lambda \in(0,\frac{1}{\theta})$,
    \begin{equation}
        \expect*{\exp (\lambda(W - \expect*{W}))} 
            \leq \exp\left(\frac{\lambda \theta}{1-\lambda \theta}\right)
                \expect*{\exp \left(\frac{\lambda V}{\theta}\right)}.
    \end{equation}
\end{theorem}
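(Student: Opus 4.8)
The plan is to prove the bound by the entropy method, following the differential-inequality strategy of \cite{boucheron2003concentration}. Write $Z = W - \expect*{W}$ and, for $\lambda \in (0, 1/\theta)$, set $F(\lambda) = \expect*{e^{\lambda Z}}$, so that the claim reads $F(\lambda) \leq e^{\lambda\theta/(1-\lambda\theta)}\,\expect*{e^{\lambda V/\theta}}$. The object to control is the entropy $\mathrm{Ent}(e^{\lambda Z}) = \lambda F'(\lambda) - F(\lambda)\log F(\lambda)$, since dividing any upper bound on it by $\lambda^2 F(\lambda)$ turns it into a differential inequality for the map $\lambda \mapsto \frac{1}{\lambda}\log F(\lambda)$, which can then be integrated from $0$.

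First I would use the sub-additivity (tensorization) of entropy to reduce the global quantity to conditional ones, $\mathrm{Ent}(e^{\lambda Z}) \leq \sum_{i} \expect*{\mathrm{Ent}^{(i)}(e^{\lambda Z})}$, where $\mathrm{Ent}^{(i)}$ integrates only over $X_i$. Each conditional entropy is then bounded pointwise through the resampled variables $W_i$: setting $\psi(x) = e^{x} - x - 1$, the variational formula for entropy applied to the pair $(W, W_i)$ yields $\mathrm{Ent}^{(i)}(e^{\lambda Z}) \leq \expect*{e^{\lambda Z}\psi(-\lambda(W - W_i))}$ (with the expectation also averaging over the independent copy $X_i'$). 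Since $\psi(-u) \leq u^2/2$ on the relevant range, summing over $i$ makes the Efron--Stein quantity $V = \expect*{\sum_i (W - W_i)^2 \mid X}$ reappear and produces a bound of the shape $\mathrm{Ent}(e^{\lambda Z}) \leq \lambda^2\,\expect*{V e^{\lambda Z}}$.

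The decisive step is to decouple the mixed expectation $\expect*{V e^{\lambda Z}}$, and this is where the free parameter $\theta$ enters. I would split the product $\lambda V \cdot e^{\lambda Z}$ by a Young/Hölder-type inequality so as to separate a factor carrying $e^{\lambda V/\theta}$ from a factor carrying $e^{\lambda Z}$; this converts the differential inequality into one that closes in terms of $\expect*{e^{\lambda V/\theta}}$ and $F(\lambda)$ alone. Integrating the resulting inequality for $\frac{d}{d\lambda}\left(\frac{1}{\lambda}\log F(\lambda)\right)$ from $0$ to $\lambda$ --- using $F(0) = 1$ and $\frac{1}{\lambda}\log F(\lambda) \to \expect*{Z} = 0$ as $\lambda \to 0$ --- then yields the stated inequality. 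The $\theta$-dependent prefactor is exactly the antiderivative $\int_{0}^{\lambda} \theta(1 - s\theta)^{-2}\,ds = \frac{\lambda\theta}{1 - \lambda\theta}$, and the constraint $\lambda\theta < 1$ is precisely what keeps this integral finite.

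I expect the decoupling to be the main obstacle. The quantity $V$ is itself a function of $X$ and is heavily correlated with $e^{\lambda Z}$, so Hölder's inequality cannot be applied naively to factor the two; arranging the $\theta$-splitting so that it produces the sharp constant $\frac{\lambda\theta}{1-\lambda\theta}$ rather than a lossy one is the delicate part. A secondary subtlety is the pointwise $\psi$-estimate: the inequality $\psi(-u) \leq u^2/2$ is immediate for $u \geq 0$ but must be handled with care for $u < 0$, so that the full two-signed $V$ --- and not merely its positive part, which would control only one tail --- governs the bound; applying the resulting estimate to both $W$ and $-W$ then gives the corresponding control of the two tails. Since the statement is quoted verbatim from \cite{boucheron2003concentration}, in the paper it is enough to cite it, but the argument above is the route I would follow to reconstruct it.
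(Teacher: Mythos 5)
The paper does not prove this statement at all: it is quoted verbatim from \cite{boucheron2003concentration}, and you correctly observe that a citation suffices. So the only meaningful comparison is with the proof in that reference, whose architecture (Herbst-type differential inequality, tensorization of entropy, modified log-Sobolev bound via $\psi$, decoupling, integration) your sketch reproduces. However, as a reconstruction your plan has a genuine hole exactly where the theorem lives: the decoupling of $\mathbb{E}\left[V e^{\lambda Z}\right]$, $Z = W - \mathbb{E}[W]$. You call it a ``Young/H\"older-type'' splitting, flag it as the main obstacle, correctly note that H\"older cannot work because $V$ and $Z$ are strongly dependent --- and then leave it unresolved. The missing tool is the duality (variational) formula for entropy: for $Y \geq 0$ and any $U$ with $\mathbb{E}\left[e^{U}\right] < \infty$, one has $\mathbb{E}[UY] \leq \operatorname{Ent}(Y) + \mathbb{E}[Y]\log\mathbb{E}\left[e^{U}\right]$. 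Applied with $Y = e^{\lambda Z}$ and $U = \lambda V/\theta$, and combined with your entropy bound $\operatorname{Ent}(e^{\lambda Z}) \leq \lambda^{2}\,\mathbb{E}\left[V e^{\lambda Z}\right]$, it gives $(1-\lambda\theta)\operatorname{Ent}(e^{\lambda Z}) \leq \lambda\theta\, F(\lambda)\log\mathbb{E}\left[e^{\lambda V/\theta}\right]$; the dependence between $V$ and $Z$ is irrelevant here. Without this identity the differential inequality never closes, so the plan as written does not yield a proof.

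Two further steps would fail as described. First, $\psi(-u) \leq u^{2}/2$ is simply false for $u < 0$ (there $\psi(-u) = e^{-u}+u-1$ grows exponentially), and no amount of care recovers it; the resolution in \cite{boucheron2003concentration} goes the opposite way from what you propose: a symmetry argument inserts the indicator $\mathbbm{1}_{\{W > W_{i}\}}$, one proves the theorem with $V_{+} = \mathbb{E}\bigl[\sum_{i}(W-W_{i})_{+}^{2} \mid X\bigr]$, and the version with the full $V$ stated here follows a fortiori since $V_{+} \leq V$ (the lower tail is then handled, as you say, by applying the result to $-W$). Second, the integration does not produce the stated inequality: carried out correctly --- using that $s \mapsto s^{-1}\log\mathbb{E}\left[e^{sV/\theta}\right]$ is nondecreasing and $\int_{0}^{\lambda}\frac{\theta}{1-s\theta}\,ds = -\log(1-\lambda\theta) \leq \frac{\lambda\theta}{1-\lambda\theta}$ --- the entropy method lands on the \emph{multiplicative} bound $\log\mathbb{E}\left[e^{\lambda Z}\right] \leq \frac{\lambda\theta}{1-\lambda\theta}\log\mathbb{E}\left[e^{\lambda V/\theta}\right]$, which is Theorem~2 of \cite{boucheron2003concentration}, not the \emph{additive} form $\frac{\lambda\theta}{1-\lambda\theta} + \log\mathbb{E}\left[e^{\lambda V/\theta}\right]$ stated in the present theorem; your claim that the prefactor arises as $\int_{0}^{\lambda}\theta(1-s\theta)^{-2}\,ds$ does not correspond to any step of that computation. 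Since neither form implies the other in general, bridging this discrepancy requires an additional argument that your proposal does not contain.
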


Moreover, a straightforward consequence of \cite[Theorem~2]{boucheron2005moment}
yields
\begin{theorem}\label{thm:bound_third_moment_w} 
    \begin{equation}
        \expect*{\abs*{W-\expect*{W}}^3} < \expect*{\abs*{V}^{\frac{3}{2}}}.
    \end{equation}
\end{theorem}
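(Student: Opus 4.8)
The plan is to derive the bound from the one-sided moment inequality of Boucheron, Bousquet, Lugosi and Massart \cite[Theorem~2]{boucheron2005moment}, specialized to the exponent $q = 3$. First I would split the symmetric quantity $V$ into its positive and negative parts,
\[
    V^{+} = \expect*{\sum_{i=1}^{n}(W - W_{i})_{+}^{2} \mid X}, \qquad
    V^{-} = \expect*{\sum_{i=1}^{n}(W - W_{i})_{-}^{2} \mid X},
\]
so that $V = V^{+} + V^{-}$, since $(W - W_{i})^{2} = (W - W_{i})_{+}^{2} + (W - W_{i})_{-}^{2}$. The cited theorem controls the upper tail of a function of independent variables through $V^{+}$; applying it to $W$ bounds the upper deviations, and applying it to $-W$ (whose upper tail is the lower tail of $W$, and whose associated quantity is exactly $V^{-}$) bounds the lower deviations. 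Concretely, for $q \ge 2$ it yields moment bounds of the form $\expect*{(W - \expect*{W})_{+}^{q}} \le C_{q}\,\expect*{(V^{+})^{q/2}}$ and $\expect*{(W - \expect*{W})_{-}^{q}} \le C_{q}\,\expect*{(V^{-})^{q/2}}$, which I would instantiate at $q = 3$.

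Next I would reassemble the absolute moment. Writing $\abs*{W - \expect*{W}}^{3} = (W - \expect*{W})_{+}^{3} + (W - \expect*{W})_{-}^{3}$ and taking expectations, the two one-sided bounds give
\[
    \expect*{\abs*{W - \expect*{W}}^{3}}
    \le C_{3}\left(\expect*{(V^{+})^{3/2}} + \expect*{(V^{-})^{3/2}}\right).
\]
The closing step uses the superadditivity of $t \mapsto t^{3/2}$ on $[0,\infty)$: since $a^{3/2} + b^{3/2} \le (a+b)^{3/2}$ for $a, b \ge 0$, one has $(V^{+})^{3/2} + (V^{-})^{3/2} \le (V^{+} + V^{-})^{3/2} = V^{3/2}$ pointwise, hence $\expect*{(V^{+})^{3/2}} + \expect*{(V^{-})^{3/2}} \le \expect*{\abs*{V}^{3/2}}$. (Alternatively, and more crudely, $V^{\pm} \le V$ already bounds each term by $\expect*{\abs*{V}^{3/2}}$.) Combining yields $\expect*{\abs*{W - \expect*{W}}^{3}} \le C_{3}\,\expect*{\abs*{V}^{3/2}}$, which is the asserted inequality up to the value of the constant.

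The hard part will be the constant. The cited theorem holds with a single constant valid uniformly over all $q \ge 2$, and this constant is far from optimal at the specific value $q = 3$; inserting it verbatim produces a constant larger than $1$, whereas the statement is written with constant $1$. For the intended downstream use — an application of Markov's inequality and Borel--Cantelli to obtain almost sure convergence under $\expect*{\abs*{J_{1,1}}^{3}} < \infty$ — any finite constant suffices, so the genuine content is the inequality $\expect*{\abs*{W - \expect*{W}}^{3}} \le C\,\expect*{\abs*{V}^{3/2}}$. To recover exactly the constant $1$ one would need the sharper $q = 3$ form of the moment inequality rather than the uniform-in-$q$ bound. The Gaussian benchmark is reassuring: for $W = \sum_{i} a_{i} X_{i}$ one computes $V = \sum_{i} a_{i}^{2}(X_{i}^{2} + \sigma^{2})$, and in the many-summand limit $\expect*{\abs*{W-\expect*{W}}^{3}}/\expect*{V^{3/2}} \to 1/\sqrt{\pi} \approx 0.56$, so constant $1$ is plausibly attainable; confirming it in full generality is the delicate point.
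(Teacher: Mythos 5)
Your proposal follows exactly the paper's route: the paper's entire proof of this theorem is the one-line citation of \cite[Theorem~2]{boucheron2005moment}, and your decomposition $V = V^{+} + V^{-}$, the one-sided moment bounds at $q=3$, and the superadditivity step $(V^{+})^{3/2} + (V^{-})^{3/2} \le V^{3/2}$ are precisely the details that citation hides. Your reservation about the constant is also well-founded, and it points to a defect in the paper's statement rather than in your argument: the cited theorem gives $\|(W-\mathbb{E}W)_{\pm}\|_q \le \sqrt{\kappa q}\,\|\sqrt{V^{\pm}}\|_q$ with $\kappa = \sqrt{e}/(2(\sqrt{e}-1)) < 1.271$, so at $q=3$ one obtains $\mathbb{E}\lvert W-\mathbb{E}W\rvert^{3} \le (3\kappa)^{3/2}\,\mathbb{E}\lvert V\rvert^{3/2}$ with $(3\kappa)^{3/2} \approx 7.4$, not the constant-$1$ strict inequality displayed in Theorem~\ref{thm:bound_third_moment_w}; the latter is not a ``straightforward consequence'' of the citation, and the paper offers no argument for it. The overstatement is harmless downstream, which you also correctly observe: the only invocation of this theorem, in the proof of claim~(b) of Theorem~\ref{thm:min_max_self_averaging}, uses it in the form $\mathbb{E}\lvert F-\mathbb{E}F\rvert^{3} \le C N^{3/2}$ for some unspecified constant $C$, which is exactly what your argument delivers, and the Chebyshev--Borel--Cantelli conclusion is unaffected.
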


To prove both Theorem~\ref{thm:min_max_self_averaging} and Theorem~\ref{thm:universality_min_max} a key role is played by
the \emph{free energy} function 
$F : \mathbb{R}^{N \times N} \rightarrow \mathbb{R}$
defined as
\begin{equation}\label{eq:free_energy}
    F_{\beta}(X) := \beta^{-1} \log Z(X).
\end{equation}
with $Z(X) := \sum_{\eta} e^{\beta H(X, \eta)}$, 
$H(X, \eta) = \frac{1}{\sqrt{N}}\sum_{i,j} X_{i,j} \eta_{i} \eta_{j}$.
and $X = \{X_{i,j}\}_{1 \leq i, j, \leq N}$ a collection
of independent random variables (the matrix of the couplings).
We do not write the dependence of $F_{\beta}$ and $Z$ on $N$ to make
the notation less heavy.
Further, we set
\begin{equation}
    p(X, \eta) := \frac{e^{\beta H(X, \eta)}}{Z_{X}};
    \quad    
    \average*{\eta_i\eta_j}{X} = \sum_{\eta}\eta_{i}\eta_{j}p(X, \eta)
\end{equation}
where $p(X, \eta)$ is the \emph{Gibbs measure}
of $\eta$ and
$\average*{}{X}$ denotes the expectation with respect to the Gibbs
measure when the matrix of couplings is $X$.

In the next lemma, we determine bounds on the derivatives of $F$ with
respect to one of the couplings.
\begin{lemma}\label{thm:bounds_derivatives_F}
    Let $F : \mathbb{R}^{N \times N} \rightarrow \mathbb{R}$ be as in
    \eqref{eq:free_energy}.
    Then
    \begin{equation} 
        \abs*{\frac{\partial F_{\beta}(X)}{\partial X_{i,j}}}\leq \frac{1}{\sqrt{N}}\, ;
        \quad
        \abs*{\frac{\partial^{2} F_{\beta}(X)}{\partial X_{i,j}^{2}}}\leq \frac{\beta}{4N}\, ;
        \quad
        \abs*{\frac{\partial^{3} F_{\beta}(X)}{\partial X_{i,j}^{3}}}
             \leq \frac{\beta^{2}}{6 \sqrt{3} N^{3/2}}
    \end{equation}
\end{lemma}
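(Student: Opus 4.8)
The plan is to establish all three bounds by direct differentiation of $F_\beta(X)=\beta^{-1}\log Z(X)$, exploiting throughout that $\eta_i\eta_j\in\{0,1\}$ for $\eta\in\{0,1\}^N$. First I would compute the first derivative. Since $\partial H/\partial X_{i,j}=\tfrac{1}{\sqrt N}\eta_i\eta_j$, a short calculation gives $\partial_{X_{i,j}}F_\beta = \tfrac{1}{\sqrt N}\langle\eta_i\eta_j\rangle_X$, the Gibbs average already introduced above. Because $0\le\eta_i\eta_j\le1$ pointwise we have $0\le\langle\eta_i\eta_j\rangle_X\le1$, which immediately yields the first bound $\lvert\partial_{X_{i,j}}F_\beta\rvert\le N^{-1/2}$.

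The key computational tool for the higher derivatives is the standard identity that differentiating a Gibbs average with respect to a coupling produces a covariance: for any observable $A$ not depending on $X$, $\partial_{X_{i,j}}\langle A\rangle_X = \tfrac{\beta}{\sqrt N}\big(\langle A\,\eta_i\eta_j\rangle_X-\langle A\rangle_X\langle\eta_i\eta_j\rangle_X\big)$. Applying this with $A=\eta_i\eta_j$ and writing $m:=\langle\eta_i\eta_j\rangle_X$, the idempotence $(\eta_i\eta_j)^2=\eta_i\eta_j$ collapses the second moment and gives $\partial_{X_{i,j}}^2 F_\beta = \tfrac{\beta}{N}\,m(1-m)$. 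Maximizing $m(1-m)$ over $m\in[0,1]$ gives $1/4$, hence the second bound $\lvert\partial_{X_{i,j}}^2F_\beta\rvert\le \tfrac{\beta}{4N}$.

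For the third derivative I would differentiate $m(1-m)$ once more, using $\partial_{X_{i,j}}m=\tfrac{\beta}{\sqrt N}m(1-m)$ obtained from the same covariance identity, to get $\partial_{X_{i,j}}^3 F_\beta = \tfrac{\beta^2}{N^{3/2}}(1-2m)\,m(1-m)$. It then remains to bound the cubic $g(m)=(1-2m)\,m(1-m)$ on $[0,1]$. Setting $g'(m)=6m^2-6m+1=0$ gives the critical points $m=\tfrac{3\pm\sqrt3}{6}$; at these points $1-2m=\pm\tfrac{1}{\sqrt3}$ and $m(1-m)=\tfrac16$, so $\lvert g\rvert$ attains its maximum value $\tfrac{1}{6\sqrt3}$. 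This produces exactly the third bound $\lvert\partial_{X_{i,j}}^3F_\beta\rvert\le \tfrac{\beta^2}{6\sqrt3\,N^{3/2}}$.

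No step presents a genuine obstacle: everything reduces to iterating the covariance identity three times together with one elementary single-variable optimization. The only points requiring care are the bookkeeping of the $\beta$ and $N^{-1/2}$ factors picked up at each differentiation, and the sharp evaluation of $\max_{[0,1]}\lvert g\rvert$, which must come out to be exactly $1/(6\sqrt3)$ in order to reproduce the stated constant.
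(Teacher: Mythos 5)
Your proposal is correct and follows essentially the same route as the paper: express each derivative of $F_\beta$ in terms of the Gibbs average $m=\average*{\eta_i\eta_j}{X}$, use the idempotence $(\eta_i\eta_j)^2=\eta_i\eta_j$ to collapse the covariance to $m(1-m)$, and bound the resulting polynomials $m(1-m)\le\tfrac14$ and $\abs{m(1-m)(1-2m)}\le\tfrac{1}{6\sqrt3}$ on $[0,1]$. Your explicit verification of the maximum of $\abs{(1-2m)\,m(1-m)}$ via the critical points $m=\tfrac{3\pm\sqrt3}{6}$ is a detail the paper states without proof, but the argument is otherwise identical.
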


\begin{proof}
    At first, observe that a straightforward computation gives
    \begin{equation}\label{eq:preliminary_derivatives}
    \frac{\partial H(X, \eta)}{\partial X_{i,j}} = \frac{1}{\sqrt{N}} \eta_{i} \eta_{j} \, ;
    \quad   
    \frac{\partial Z}{\partial X_{i,j}} (X) 
        = \frac{\beta}{\sqrt{N}}\sum_{\eta} \eta_{i} \eta_{j}  e^{\beta H(X, \eta)}
        =\frac{\beta}{\sqrt{N}}\average*{\eta_i \eta_j}{X} Z(X).
    \end{equation}
    Using \eqref{eq:preliminary_derivatives}, the bound on the first 
    derivative of $F$ with respect to $X_{i,j}$ is readily obtained from the following
    \begin{equation}
        \frac{\partial F_{\beta}(X)}{\partial X_{i,j}}
            = \frac{1}{\beta Z(X)}\frac{\partial Z(X)}{\partial X_{i,j}}
            =\frac{1}{\sqrt{N}} \average*{\eta_{i}\eta_{j}}{X}
    \end{equation}
    by observing that $\average{\eta_{i}\eta_{j}}{X} \in [0, 1]$.
    To compute higher order derivatives note that
    \begin{equation}
        \frac{\partial p(X, \eta)}{\partial X_{i,j}}
        =
        \frac{ \frac{\beta}{\sqrt{N}}\eta_{i}\eta_{j} e^{\beta H(X, \eta)} Z- 
        \frac{\beta}{\sqrt{N}}\average*{\eta_{i}\eta_{j}}{X} e^{\beta H(X, \eta)} Z}{ Z^2}
        =
        \frac{\beta}{\sqrt{N}}\left(\eta_{i}\eta_{j}-\average*{\eta_{i}\eta_{j}}{X}\right) p(X, \eta)
    \end{equation}
    which, in turn, yields,
    \begin{align}
        \frac{\partial \average*{\eta_{i}\eta_{j}}{X}}{\partial X_{i,j}}
        & = \sum_{\eta} \eta_{i}\eta_{j} \frac{\partial p}{\partial X_{i,j}} (X, \eta)
          = \frac{\beta}{\sqrt{N}}
            \sum_{\eta} \eta_{i}\eta_{j} \left(\eta_{i}\eta_{j}
                                               -\average*{\eta_{i}\eta_{j} }{X}\right) p(X, \eta) \\
        & = \frac{\beta}{\sqrt{N}}\left(\average*{\eta_{i}\eta_{j} }{X}- \average*{\eta_{i}\eta_{j} }{X}^2\right) 
          = \frac{\beta}{\sqrt{N}}\average*{\eta_{i}\eta_{j}}{X}\left(1- \average*{\eta_{i}\eta_{j} }{X}\right)
    \end{align}
    and
    \begin{align}
        \frac{\partial^2 \average*{\eta_{i}\eta_{j}}{X}}{\partial X_{ij}^2}
        = \frac{\beta}{\sqrt{N}} 
          \frac{\partial\left(\average*{\eta_{i}\eta_{j}}{X}
                              \left(1 - \average*{\eta_{i}\eta_{j}}{X}\right)
                        \right)}
               {\partial X_{ij}}
        = \frac{\beta^2}{N} 
           \average*{\eta_{i}\eta_{j}}{X}\left(1 - \average*{\eta_{i}\eta_{j}}{X}\right)
           \left(1 - 2\average*{\eta_{i}\eta_{j}}{X}\right).
    \end{align}
    Consequently,
    \begin{gather}
        \frac{\partial^{2} F_{\beta}(X)}{\partial X_{i,j}^{2}} 
        = \frac{\beta}{N}\average*{\eta_{i}\eta_{j}}{X}
          \left(1 - \average*{\eta_{i}\eta_{j} }{X}\right)
          \quad \text{ and } \quad\\
        \frac{\partial^{3} F_{\beta}(X)}{\partial X^{3}}
        = \frac{\beta^2}{N^{3/2}}\average*{\eta_{i}\eta_{j}}{X}
                                 (1 - \average*{\eta_{i}\eta_{j}}{X})
                                 \left(1 - 2\average*{\eta_{i}\eta_{j}}{X}\right).
    \end{gather}
    The claim follows by observing that, in the interval $[0, 1]$,  
    $x (1 - x) \leq \frac{1}{4}$ and
    $\abs{x (1 - x) (1 - 2x)} \leq \frac{1}{6\sqrt{3}}$
\end{proof}

\subsection{Proof of Theorem~\ref{thm:min_max_self_averaging}}
\label{sec:proof_thm_min_max_self_averaging}
We give the proof for the convergence of $m_{\max, N}$.
The proof for $m_{\min, N}$ is analogous.

In the rest of this section, we write
$\overbar{H}(J) := \max_{\eta} H(J, \eta)$

To prove the convergence in probability of $m_{\max, N}$ to its expectation we use the following results which allow us to bound the variance of the free energy.

Assume $J_{i,j}$ are independent identically distributed 
random variables with 
expected value zero and variance $1$.
Call $F = F_{\beta}(J) = \frac{1}{\beta} \log \sum_{\eta}
            e^{\beta\sum_{i,j} J_{i,j} \eta_{i} \eta_{j}}$.
The following theorem provides a bound on the variance of $F$.
\begin{theorem}\label{thm:var_F}
    $\variance*{F} \leq N$
\end{theorem}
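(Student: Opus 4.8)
The plan is to apply the Efron--Stein inequality (Theorem~\ref{thm:efronstein}) with the $N^{2}$ couplings $\{J_{i,j}\}_{1\le i,j\le N}$ playing the role of the independent variables $X_{1},\dots,X_{n}$ and with $g=F_{\beta}$. The entire argument rests on the first-derivative bound already established in Lemma~\ref{thm:bounds_derivatives_F}, namely $\abs*{\partial F_{\beta}(X)/\partial X_{i,j}}\le 1/\sqrt{N}$, which expresses that $F$ is Lipschitz in each coordinate with constant $1/\sqrt{N}$. Since $F$ is a smooth function of the couplings (a logarithm of a sum of exponentials), this pointwise derivative bound is exactly what is needed to control the Efron--Stein increments.

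First I would fix a pair $(i,j)$, let $J_{i,j}'$ be an independent copy of $J_{i,j}$, and write $F_{(i,j)}$ for the free energy obtained by replacing $J_{i,j}$ with $J_{i,j}'$ while leaving all other couplings unchanged. Applying the mean value theorem along the segment joining the two coupling matrices produces an intermediate point $\xi$ with $F-F_{(i,j)}=\frac{\partial F_{\beta}}{\partial X_{i,j}}(\xi)\,(J_{i,j}-J_{i,j}')$, so the derivative bound immediately gives the coordinatewise estimate $(F-F_{(i,j)})^{2}\le \frac{1}{N}(J_{i,j}-J_{i,j}')^{2}$.

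Next I would sum over all $N^{2}$ pairs and take expectations, using the tower property to remove the conditioning in the definition of $V$. Because $J_{i,j}'$ is independent of $J_{i,j}$ and both have mean zero and unit variance, $\expect*{(J_{i,j}-J_{i,j}')^{2}}=\expect*{J_{i,j}^{2}}+\expect*{(J_{i,j}')^{2}}=2$. Hence
\[
    \expect*{V}=\expect*{\sum_{i,j}(F-F_{(i,j)})^{2}}\le \frac{1}{N}\sum_{i,j}\expect*{(J_{i,j}-J_{i,j}')^{2}}=\frac{1}{N}\cdot N^{2}\cdot 2=2N,
\]
and feeding this into Theorem~\ref{thm:efronstein} yields $\variance*{F}\le \tfrac{1}{2}\expect*{V}\le N$, which is the claim.

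There is no genuine obstacle in this argument: all of the content sits in the $1/\sqrt{N}$ first-derivative estimate of Lemma~\ref{thm:bounds_derivatives_F}, which makes each of the $N^{2}$ coordinate increments of order $1/N$ so that their sum is of order $N$. The only place demanding a little care is the bookkeeping in the conditional expectation defining $V$: one may instead condition on $J$ first, obtaining $V\le \frac{1}{N}\sum_{i,j}(J_{i,j}^{2}+1)$ and then $\expect*{V}\le 2N$, which gives the same constant. I would present whichever of the two routes reads most cleanly.
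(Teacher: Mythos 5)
Your proof is correct and follows essentially the same route as the paper: both apply the Efron--Stein inequality (Theorem~\ref{thm:efronstein}) to the $N^{2}$ independent couplings and control each increment through the first-derivative bound of Lemma~\ref{thm:bounds_derivatives_F}, the paper stating the Lipschitz estimate $\abs*{F - F_{i,j}} \leq \frac{1}{\sqrt{N}}\abs*{J_{i,j} - J'_{i,j}}$ directly where you make the mean-value-theorem step explicit. The only cosmetic difference is bookkeeping: the paper invokes the i.i.d.\ assumption to collapse the sum to $\frac{1}{2}N^{2}\expect*{(F - F_{1,1})^{2}}$ before bounding, whereas you bound each of the $N^{2}$ terms and then sum, and both give $\variance*{F} \leq N$ with the same constant.
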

\begin{proof}
    Let $F_{i,j}$ be obtained from $F$ by substituting $J_{i,j}$ with $J'_{i,j}$ (an independent copy of $J_{i,j}$). Then, by Lemma~\ref{thm:bounds_derivatives_F}, 
    $\abs*{F - F_{i,j}} \leq \frac{1}{\sqrt{N}} \abs*{J_{i,j} - J'_{i,j}}$.
    From Theorem~\ref{thm:efronstein} it follows
    \begin{align}
        \variance*{F} 
            & \leq \frac{1}{2} \expect*{\sum_{i,j}(F - F_{i,j})^{2}}
                         = \frac{1}{2} \sum_{i,j} \expect*{(F - F_{i,j})^{2}}
                         = \frac{1}{2} N^{2} \expect*{(F - F_{1,1})^{2}}\\
            & \leq \frac{1}{2} N^{2} 
              \expect*{ \left( \frac{1}{\sqrt{N}} \abs*{J_{i,j} - J'_{i,j}} \right)^{2} }
              = N \expect*{J_{1,1}^{2}} = N
    \end{align}
    Note that this estimate is uniform in $\beta$.
\end{proof}
Since $\overbar{H}(J) = \lim_{\beta \to \infty} F_{\beta}(J)$, as
an immediate consequence we get
\begin{corollary}\label{thm:variance_max_per_particle}
    $\variance*{\overbar{H}(J)} \leq N$.
\end{corollary}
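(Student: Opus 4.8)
The final statement to prove is Corollary~\ref{thm:variance_max_per_particle}, namely that $\variance*{\overbar{H}(J)} \leq N$.

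\textbf{The approach.} The plan is to obtain the bound on the variance of $\overbar{H}(J) = \max_\eta H(J,\eta)$ by passing to the limit $\beta \to \infty$ in the bound of Theorem~\ref{thm:var_F}, which gives $\variance*{F_\beta(J)} \leq N$ uniformly in $\beta$. The crucial feature of Theorem~\ref{thm:var_F} emphasized in its proof is precisely that the estimate is uniform in $\beta$, so no blow-up occurs as $\beta$ grows. The key analytic fact I would use is the pointwise convergence $\overbar{H}(J) = \lim_{\beta \to \infty} F_\beta(J)$, already noted in the text: this follows from the standard free-energy/ground-state relation, since for finitely many configurations $\eta \in \{0,1\}^N$ one has
\begin{equation*}
    \max_\eta H(J,\eta) \leq F_\beta(J) = \frac{1}{\beta}\log\sum_\eta e^{\beta H(J,\eta)} \leq \max_\eta H(J,\eta) + \frac{1}{\beta}\log 2^N,
\end{equation*}
so that $F_\beta(J) \to \overbar{H}(J)$ for every fixed realization of $J$ as $\beta \to \infty$, with the convergence being monotone from above and at rate $O(N/\beta)$.

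\textbf{Key steps.} First I would record the convergence $F_\beta(J) \to \overbar{H}(J)$ almost surely (indeed for every $J$) together with the explicit gap bound $0 \leq F_\beta(J) - \overbar{H}(J) \leq \frac{N\log 2}{\beta}$. Second, I would invoke Fatou's lemma (or lower semicontinuity of variance under pointwise limits) to transfer the uniform bound: writing $\variance*{F_\beta} = \expect*{(F_\beta - \expect*{F_\beta})^2}$, the pointwise convergence of $F_\beta$ to $\overbar{H}$, combined with the convergence of the centering constants $\expect*{F_\beta} \to \expect*{\overbar{H}}$, gives
\begin{equation*}
    \variance*{\overbar{H}(J)} = \expect*{\bigl(\overbar{H} - \expect*{\overbar{H}}\bigr)^2} \leq \liminf_{\beta \to \infty} \expect*{\bigl(F_\beta - \expect*{F_\beta}\bigr)^2} = \liminf_{\beta \to \infty}\variance*{F_\beta} \leq N,
\end{equation*}
where the inequality is Fatou applied to the nonnegative integrands $(F_\beta - \expect*{F_\beta})^2$.

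\textbf{Main obstacle.} The only delicate point is justifying the interchange of limit and expectation, i.e. the application of Fatou's lemma, and the convergence of the expectations $\expect*{F_\beta} \to \expect*{\overbar{H}}$. The gap bound $0 \leq F_\beta(J) - \overbar{H}(J) \leq \frac{N\log 2}{\beta}$ is a deterministic constant independent of $J$, so it immediately controls $\expect*{F_\beta} - \expect*{\overbar{H}}$ and makes the centering convergence routine; since the integrands $(F_\beta - \expect*{F_\beta})^2$ are nonnegative, Fatou applies directly with no further integrability hypothesis needed. Thus the argument is short once the uniform-in-$\beta$ bound of Theorem~\ref{thm:var_F} is in hand, and the corollary follows by taking $\beta \to \infty$.
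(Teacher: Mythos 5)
Your proof is correct and follows essentially the same route as the paper, which deduces the corollary directly from the uniform-in-$\beta$ bound of Theorem~\ref{thm:var_F} via $\overbar{H}(J) = \lim_{\beta \to \infty} F_{\beta}(J)$. The paper treats the passage to the limit as immediate; your sandwich bound $0 \leq F_{\beta}(J) - \overbar{H}(J) \leq \beta^{-1} N \log 2$ and the Fatou argument simply make explicit the justification the paper leaves implicit.
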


By Chebyshev's inequality and the previous corollary we get
\begin{align}
    \proba*{ \abs*{ \frac{\overbar{H}(J)}{N} - \expect*{\frac{\overbar{H}(J)}{N} }} > \varepsilon  }
    \leq \frac{\variance*{ \frac{\overbar{H}(J)}{N} } }{\varepsilon^{2}}
    = \frac{1}{N^{2}} \frac{\variance*{\overbar{H}(J)}}{\varepsilon}
    \leq \frac{1}{N \varepsilon}
\end{align}
from which claim \ref{thm:min_max_self_averaging_conv_prob} follows.

To prove the other two claims,
set $V_{F} = \expect*{\sum_{i,j} (F - F_{i,j})^{2} \mid J}$.
We have
\begin{align}\label{eq:bound_V_F}
    V_{F}
          & = \sum_{i,j} \expect*{(F - F_{i,j})^{2} \mid J} 
            \leq \sum_{i,j} \frac{1}{N} \expect*{(J_{i,j} - J'_{i,j})^{2} \mid J_{i,j}} \\
          & = \frac{1}{N} \sum_{i,j} \left( J_{i,j}^{2} + \expect*{(J'_{i,j})^2} \right) 
            = \frac{1}{N} \sum_{i,j} \left( J_{i,j}^{2} + 1 \right)
\end{align}

Let us examine the case $\expect*{\abs*{J_{1,1}}^{3}} < \infty$.

Leveraging on Jensen's inequality we have
$\abs*{\sum_1^n a_i}^{\frac{3}{2}} \leq \sqrt{n}\sum_{1}^{n} \abs*{a_i}^{\frac{3}{2}}$.
The latter inequality can be used to show that
\begin{align}
    \expect*{V_{F}^{\frac{3}{2}}} 
    & \leq \expect[\Big]{\abs[\Big]{\frac{1}{N}\sum_{i,j} \left(J_{i,j}^2 + 1\right)}^{\frac{3}{2}}} 
      \leq  \frac{1}{N^{\frac32}} \expect[\Big]{ N \sum_{i,j} \abs*{J_{i,j}^2 + 1}^{\frac{3}{2}}} \\
    & \leq \frac{1}{\sqrt{N}} \expect*{\sum_{i,j} \sqrt{2} (\abs*{J_{i,j}}^3 + 1)}
      =\sqrt{2}N^{\frac{3}{2}} \expect*{\abs*{J_{1,1}}^3 + 1}.
\end{align}
Then, by Theorem~\ref{thm:bound_third_moment_w} and for some constant $C$,
\begin{equation}
    \expect*{\abs[\bigf]{F - \expect{F}}^3} \leq CN^{\frac{3}{2}}
\end{equation}
which, in turn, implies
\begin{equation}\label{eq:almost_sure_convergence_F}
    \expect*{ \abs*{ \frac{F-\expect{F}}{N} }^{3} } \leq CN^{-\frac{3}{2}}
\end{equation}
Since \eqref{eq:almost_sure_convergence_F} does not depend on $\beta$,
it holds for $\overbar{H}(J) = \lim_{\beta \to \infty} F_{\beta}(J)$
as well. 
Chebyshev's inequality and Borel-Cantelli lemma allow us to conclude the proof of claim \ref{thm:min_max_self_averaging_conv_as}.

Consider now the case of strictly subgaussian $J_{i,j}$'s.
Recall 
that for a centered strictly subgaussian random variable $X$
with variance $\sigma^{2}$,
\begin{equation}\label{eq:bound_mgf_subgaussian}
    \expect*{\exp(sX)} \leq \exp\left( \frac{\sigma^{2}s^{2}}{2}\right),
    \quad \forall s \in \mathbb{R}.
\end{equation}
Also, $\forall r \in \mathbb{N}$, $\expect*{X^{2r}} \le 2^{r+1}\sigma^{2r}r!$,
yielding
\begin{align}
\expect*{e^{s\left(X^2-\sigma^2\right)}} 
    & =1+s \expect*{X^2-\sigma^2} 
       +\sum_{r=2}^{\infty} \frac{s^r \expect*{\left(X^2-\sigma^2\right)^r}}{r !} 
     \leq 1+\sum_{r=2}^{\infty} \frac{s^r \expect*{X^{2 r}}}{r !} \label{eq:bound_using_central_moments}\\
    & \leq 1+2\sum_{r=2}^{\infty} s^r 2^r \sigma^{2 r} 
     =1+\frac{8 s^2 \sigma^4}{1-2 s \sigma^2}
\end{align}
where the inequality in \eqref{eq:bound_using_central_moments}
follows from the fact that, for a non negative random variable (such as $X^{2}$),
the central $r$-th moment is smaller or equal to the $r$-th moment (recall that $X^{2}$ has expected value $\sigma^{2}$).

This result, together with \eqref{eq:bound_V_F},
can be used to bound the moment generating function
of $V_{F}$ as
\begin{align}\label{eq:bound_mgf_V_F}
    \expect*{\exp \left(t V_{F}\right)} 
        &\leq  \expect*{\exp \left(\frac{t}{N}\sum_{i,\,j} \left(X_{ij}^2 - 1 + 2\right) \right)}
         = \left( e^{\frac{2t}{N}} \expect*{\exp \left(\frac{t}{N}\left(X_{1\,1}^2 - 1\right) \right) }\right)^{N^2}\\ 
        & \leq e^{\frac{2t}{N}} \left[1+\frac{1}{N}\frac{8 t^2} {N- 2t}\right]^{N^2}\leq e^{9t^2}
\end{align}
for $N$ large enough
where the last inequality follows from the fact that
$\left[1+\frac{1}{N}\frac{8 t^2} {N- 2t}\right]^{N^2}$ is a decreasing sequence
with the same limit as $\left[1 + \frac{8 t^2 }{N^2}\right]^{N^2}$.
Then, from Theorem~\ref{thm:mgf} we get, for all $\theta>0$ and $\lambda \in (0, \frac{1}{\theta})$,
\begin{equation}
\expect*{\exp (\lambda(F-\expect*{F}))} \leq 
\exp\left(\frac{\lambda\theta}{1-\lambda\theta}\right)  \expect*{\exp \left(\frac{\lambda V_{F}}{\theta}\right)} 
\leq \exp\left(\frac{\lambda\theta}{1-\lambda\theta}+\frac{9\lambda^2}{\theta^2}\right)
\end{equation}
and hence, by exponential Markov inequality
\begin{equation}
\proba*{\abs[\big]{F-\mathbb{E}[F]} > t }
\leq 2\exp\left(-\lambda t + \frac{\lambda\theta}{1-\lambda\theta}+\frac{9\lambda^2}{\theta^2}\right)
\leq e^{-at}
\end{equation}
for some $a>0$ (by optimizing on $\lambda, \theta$) and for every $t > 0$.
Setting $t = Nz$, with $z > 0$ we get
\begin{equation}\label{eq:exponential_convergence_F}
    \proba*{\abs[\big]{F-\mathbb{E}[F]} > Nz }
    \leq e^{-aNz}.
\end{equation}

Since also
\eqref{eq:exponential_convergence_F} does not depend on $\beta$,
it holds for $\overbar{H}(J) = \lim_{\beta \to \infty} F_{\beta}(J)$.
Dividing by $N$ we get claim \ref{thm:min_max_self_averaging_conv_expfast}. \qed

\subsection{Proof of Theorem~\ref{thm:universality_min_max}}
\label{sec:proof_thm_universality_min_max}

Using the bounds of Lemma~\ref{thm:bounds_derivatives_F} we can prove
that the expectation of the free energy does not depend 
on the distribution of the couplings 
in the thermodynamic limit.

As for Theorem~\ref{thm:universality_min_max},
let $J = \left\{ J_{i,j} \right\}_{1\leq i,j\leq N}$ 
and  $Y = \left\{ Y_{i,j} \right\}_{1\leq i,j\leq N}$ 
be two independent sequences of 
independent random variables,  
such that for every $i, j$ 
$\expect*{J_{i,j}} = \expect*{Y_{i,j}} = 0$ and 
$\expect*{J_{i,j}^{2}} = \expect*{Y_{i,j}^{2}} = 1$.

\begin{theorem}\label{thm:universality_free_energy}
    Consider $F_{\beta}(J)$ as in \eqref{eq:free_energy}.
    Then
    \begin{equation}
    \lim_{N \to \infty} \left| \expect*{F_{\beta}(J)} - \expect*{F_{\beta}(Y)} \right| = 0 
    \end{equation}
\end{theorem}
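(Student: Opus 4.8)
The plan is to prove the universality of the expected free energy via a Lindeberg-type replacement (interpolation) argument, swapping the couplings $J_{i,j}$ for $Y_{i,j}$ one at a time and controlling each swap with the derivative bounds of Lemma~\ref{thm:bounds_derivatives_F}. Concretely, I would order the $N^2$ index pairs and define, for each $k$, a hybrid matrix $Z^{(k)}$ whose first $k$ entries are drawn from $Y$ and whose remaining entries are drawn from $J$, so that $Z^{(0)} = J$ and $Z^{(N^2)} = Y$. Then
\begin{equation*}
    \expect*{F_\beta(J)} - \expect*{F_\beta(Y)}
    = \sum_{k} \left( \expect*{F_\beta(Z^{(k-1)})} - \expect*{F_\beta(Z^{(k)})} \right),
\end{equation*}
and each summand differs only in a single coordinate, say $(i,j)$, where one matrix carries $J_{i,j}$ and the other carries $Y_{i,j}$.

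For a single swap I would fix all other coordinates and Taylor-expand $F_\beta$ in the one variable that changes, around the value $0$. Writing $F_\beta$ as a function of the single entry $x$ at position $(i,j)$, Taylor's theorem with remainder gives
\begin{equation*}
    F_\beta(x) = F_\beta(0) + x\,\partial F_\beta(0) + \tfrac{1}{2}x^2\,\partial^2 F_\beta(0) + R(x),
\end{equation*}
with $|R(x)| \le \frac{1}{6}|x|^3 \sup |\partial^3 F_\beta|$. Because $J_{i,j}$ and $Y_{i,j}$ share the same first and second moments ($\expect*{J_{i,j}} = \expect*{Y_{i,j}} = 0$, $\expect*{J_{i,j}^2} = \expect*{Y_{i,j}^2} = 1$) and are independent of the remaining entries, taking expectations cancels the zeroth, first, and second order terms exactly. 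Only the remainders survive, so by Lemma~\ref{thm:bounds_derivatives_F} the third-derivative bound $\frac{\beta^2}{6\sqrt{3}N^{3/2}}$ controls each term, and the single-swap difference is bounded by a constant times $\frac{\beta^2}{N^{3/2}}\big(\expect*{|J_{i,j}|^3} + \expect*{|Y_{i,j}|^3}\big)$.

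Summing over all $N^2$ index pairs then yields
\begin{equation*}
    \left| \expect*{F_\beta(J)} - \expect*{F_\beta(Y)} \right|
    \le C \beta^2 N^2 \cdot \frac{1}{N^{3/2}}
    = C \beta^2 N^{1/2},
\end{equation*}
which unfortunately grows rather than vanishes; this is the main obstacle and the reason the bare third-moment estimate is insufficient on its own. The standard remedy, which I would carry out, is a truncation argument: split each coupling as $X = X\,\mathbbm{1}_{\{|X| \le \tau_N\}} + X\,\mathbbm{1}_{\{|X| > \tau_N\}}$ for a threshold $\tau_N \to \infty$ chosen to grow slowly. On the truncated part, even without a finite third moment one has $\expect*{|X|^3 \mathbbm{1}_{\{|X| \le \tau_N\}}} \le \tau_N \expect*{X^2} = \tau_N$, giving a swap bound $\lesssim \beta^2 \tau_N N^{-1/2}$; on the tail part, finite variance forces $\expect*{X^2 \mathbbm{1}_{\{|X| > \tau_N\}}} \to 0$, and using the first-derivative Lipschitz bound $\frac{1}{\sqrt{N}}$ controls the contribution of the tail. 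Choosing $\tau_N = N^{1/2 - \epsilon}$ (or balancing against the variance-tail decay to get the stated $N^{-1/6}$ rate when $\gamma < \infty$) drives the whole difference to zero, and because the entire estimate is uniform in $\beta$, Theorem~\ref{thm:universality_free_energy} follows, after which Theorem~\ref{thm:universality_min_max} is recovered by sending $\beta \to \infty$ and using $\overbar{H}(J) = \lim_{\beta \to \infty} F_\beta(J)$ together with standard control of the $\beta^{-1}\log$(number of configurations) gap between $F_\beta$ and the optimum.
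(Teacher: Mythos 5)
Your proposal follows the same skeleton as the paper's proof: order the $N^2$ entries, swap one coupling at a time, Taylor-expand $F_\beta$ around the value $0$ of the entry being swapped, use the matching first and second moments to cancel the low-order terms, and control the remainder with the derivative bounds of Lemma~\ref{thm:bounds_derivatives_F}. You also correctly observe that the bare third-moment bound only gives $C\beta^2\sqrt{N}$ in total (the paper's estimate \eqref{eq:maggiorazione_terzo}), so that a truncation is needed when third moments are infinite. The gap is in how you execute the truncation, in two places.

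First, you propose to split each coupling as $X = X\mathbbm{1}_{\{|X|\le\tau_N\}}+X\mathbbm{1}_{\{|X|>\tau_N\}}$ and run the argument on the truncated part. But truncated variables no longer have mean $0$ and variance $1$, and their means and variances differ between $J$ and $Y$; the exact cancellation of the first- and second-order Taylor terms, which you invoke verbatim, is then false, and the resulting correction terms are never estimated. The paper never modifies the variables: it keeps the exact moment matching and instead splits the \emph{expectation of the remainder} $\mathbb{E}|R_k|$ over the two events $\{|X_k|\le K\}$ and $\{|X_k|>K\}$. Second, and more seriously, your control of the tail is quantitatively insufficient. Pairing the tail with the first-derivative (Lipschitz) bound $1/\sqrt{N}$ gives a per-swap contribution $\frac{1}{\sqrt{N}}\mathbb{E}\left(|X|;|X|>\tau_N\right)$, hence $\sqrt{N}\,\mathbb{E}\left(|X|;|X|>\tau_N\right)$ per particle after summing over the $N^2$ swaps. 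Under finite variance alone one only has $\mathbb{E}\left(|X|;|X|>\tau\right)\le g(\tau)/\tau$, where $g(\tau):=\mathbb{E}\left(X^2;|X|>\tau\right)$ may decay arbitrarily slowly; with your choice $\tau_N=N^{1/2-\epsilon}$ the tail term is of order $N^{\epsilon}g(N^{1/2-\epsilon})$, which diverges for instance when $g(K)\asymp 1/\log K$ (a legitimate finite-variance, infinite-third-moment case). The paper's device is to bound the remainder on the tail event by the \emph{second}-derivative bound of Lemma~\ref{thm:bounds_derivatives_F}, $|R_k|\le\frac{\beta}{8N}X_k^2$, so the tail contributes $\frac{\beta}{8N}\mathbb{E}\left(X^2;|X|>K\right)$ per swap, i.e.\ roughly $\beta g(K)$ per particle, which vanishes for \emph{any} $K=K_N\to\infty$ and creates no conflict with the truncated third-order term; this pairing of finite variance with the $1/N$-scaling second derivative (rather than the $1/\sqrt{N}$ Lipschitz constant) is the missing idea, and it is exactly what produces the paper's estimate \eqref{eq:maggiorazione_secondo}. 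Finally, your closing claim that ``the entire estimate is uniform in $\beta$'' is incorrect — the bounds degrade like $\beta$ and $\beta^2$ — and the passage to the optimum requires balancing the gap $N\log 2/\beta$ against the $\beta$-dependent universality bound (e.g.\ $\beta=N^{1/6}$ when $\gamma<\infty$), as the paper does in the proof of Theorem~\ref{thm:universality_min_max}.
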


\begin{proof}
    The proof is a straightforward adaptation of 
    Chatterjee's extension of Lindeberg's argument 
    for the central limit theorem. 
    See \cite{chatterjee2005simple, chatterjee2006generalization} 
    for a comprehensive treatment.

    Let $0 \leq k \leq n=N^{2}$ be any numbering of the elements of the sequences
    and define 
    \begin{gather}
    \mathbf{Z}_k:=\left(J_{1}, \ldots, J_{k-1}, J_k, Y_{k+1}, \ldots, Y_{n}\right) \\
    \mathbf{W}_k:=\left(J_{1}, \ldots, J_{k-1}, 0, Y_{k+1}, \ldots, Y_{n}\right)
    \end{gather}
    Now consider a Taylor expansion of $F_{\beta}$ around
    $J_{k} = 0$ and write
    \begin{gather}
    F_{\beta}(\mathbf{Z}_k)
        = F_{\beta}(\mathbf{W}_k)
            + J_{k} \frac{\partial F_{\beta}(\mathbf{W}_k)}{\partial J_{k}}
            + \frac{1}{2} J_{k}^2 \frac{\partial^{2} F_{\beta}(\mathbf{W}_k)}{\partial J_{k}^{2}} 
            + R_{k}\\ 
    F_{\beta}(\mathbf{Z}_{k-1}))
        = F_{\beta}(\mathbf{W}_k)
            + Y_{k} \frac{\partial F_{\beta}(\mathbf{W}_k)}{\partial J_{k}}
            + \frac{1}{2} Y_{k}^{2} \frac{\partial^{2} F_{\beta}(\mathbf{W}_k)}{\partial J_{k}^{2}}
            + S_{k}.
    \end{gather}

The bounds from Lemma ~\ref{thm:bounds_derivatives_F} then give
\begin{gather}
\left| R_k\right| \leq \frac{\beta}{8N} X_k^2, \quad
\left| S_k\right|  \leq \frac{\beta}{8N} Y_k^2 \qquad \text{and} \\
\left| R_k\right| \leq  \frac{\beta^{2}}{36 \sqrt{3} N^{3/2}} |X_k|^3,  \quad
\left| S_k\right| \leq  \frac{\beta^{2}}{36 \sqrt{3} N^{3/2}} |Y_k|^3.
\end{gather}

Noticing that, for each $k$, $J_{k}, Y_{k}$ and $\mathbf{W}_k$ are independent
and recalling that $J_{k}$ and $Y_{k}$ have both mean zero and variance $1$,
we get
\begin{equation}
\expect*{F_{\beta}(\mathbf{Z}_k)- F_{\beta}(\mathbf{Z}_{k-1}))}
    = \expect*{R_{k}- S_{k}}
\end{equation}
which gives the estimate
\begin{dmath}\label{eq:maggiorazione_secondo}
\abs*{\expect*{F_{\beta}(X)} - \expect*{F_{\beta}{(Y)}}}
	= \abs*{\sum_{i=1}^n \expect*{F_{\beta}(\mathbf{Z}_k) - F_{\beta}(\mathbf{Z}_{k-1})}}
	\leq \sum_{i=1}^n \expect*{\abs*{R_k}} + \expect*{\abs*{S_k}}\\
		+ {
			\frac{\beta}{8N}\sum_{i=1}^n \left[\mathbb{E}\left(X_i^2, \left|X_i\right|>K\right) 
											+ \mathbb{E}\left(Y_i^2 ;\left|Y_i\right|>K\right)\right]
		  }
		+ {\frac{\beta^{2}}{36 \sqrt{3} N^{3/2}}\sum_{i=1}^n \left[\mathbb{E}\left(\left|X_i\right|^3 ;\left|X_i\right| \le K\right)
			+\mathbb{E}\left(\left|Y_i\right|^3 ;\left| Y_i \right| \le K\right)\right]}
\end{dmath}

If we furthermore assume $\gamma < \infty$, we only need to use third order Taylor bounds on $R_{k}, S_{k}$
getting the more explicit bound

    \begin{equation}\label{eq:maggiorazione_terzo}
    \left| \expect*{F_{\beta}(J)} - \expect*{F_{\beta}(Y)} \right|
            \leq \frac{\beta^2 \gamma}{18\sqrt{3}}{\sqrt{N}} 
    \end{equation}

\end{proof}

Finally, we can prove Theorem~\ref{thm:universality_min_max}.
We write the proof for the maximum of $H$. The proof for
the minimum can be done in the same way by
replacing $\beta$ with $-\beta$.

Recall that we set $\overbar{H}(J) = \max_{\eta} H(J, \eta)$.

We have
\begin{align} 
\overbar{H}(J) & =\beta^{-1} \log \left[e^{\beta\overbar{H}(J)}\right]   
    \leq \beta^{-1} \log \left[\sum_{\eta} e^{\beta H(J, \eta)}\right]   
    \leq \beta^{-1} \log \left[2^N e^{\beta \overbar{H}(J)}\right]
\end{align}
getting the uniform bound
\begin{equation}
    \abs*{\overbar{H}(J) - F_{\beta}(J)} \leq \beta^{-1}  N \log 2.
\end{equation}

From Theorem~\ref{thm:universality_free_energy}
\begin{equation}\label{ground_state}
\begin{split}
    \abs*{\expect*{\overbar{H}(J)} - \expect*{\overbar{H}(Y)}}
    & \leq \abs*{\expect*{\overbar{H}(J) - F_{\beta}(J)}} 
           + \abs*{\expect*{F_{\beta}(J)} - \expect*{F_{\beta}(Y)}}
           + \abs*{\expect{\overbar{H}(Y)} - F_{\beta}(Y)} \\
    & \leq \frac{2 N \log 2}{\beta} + \abs*{\expect*{F_{\beta}(J)} - \expect*{F_{\beta}(Y)}}
\end{split}
\end{equation}

First assume $\gamma < \infty$. From \eqref{eq:maggiorazione_terzo} and \ref{ground_state}
\begin{align}
    \abs*{\expect*{\overbar{H}(J)} - \expect*{\overbar{H}(Y)}}
    & \leq \frac{2 N \log 2}{\beta} + \frac{\beta^2\gamma}{18\sqrt{3}} \sqrt{N}
\end{align}

By choosing $\beta=N^{1/6}$ we get the second part of the thesis.

We now consider $\gamma=\infty$. Define $g(K):=\max_{i, j}\left( \mathbb{E}\left(J_{i j}^2 ;\left|J_{i j}\right|>K\right), 
\mathbb{E}\Big(Y_{i j}^2 ;\left|Y_{i j}\right|>K\right) \Big)$.
From the finite variance hypothesis, $g(K) \to 0$ as $K \to \infty$. In  \eqref{eq:maggiorazione_secondo} we take 
$\beta = g(N^{\frac{1}{4}})^{-\frac{1}{2}}$,
so that $\beta \to \infty$ as $N \to \infty$. 
Also observe that  $\gamma=\infty$ implies $Kg(K) \to \infty$ as $K\to \infty$.

Estimate \eqref{eq:maggiorazione_secondo}, with $K=N^{\frac{1}{4}}$, now becomes
\begin{equation}
\frac1N \abs*{\expect*{F_{\beta}(X)} - \expect*{F_{\beta}{(Y)}}} \leq
C_1 g(N^{\frac{1}{4}})^{\frac{1}{2}}+
C_2 \left(N^{\frac{1}{4}}g(N^{\frac{1}{4}})\right)^{-1}
\end{equation}

Which vanishes as $N \to \infty$ thus proving also part 1 of Theorem \ref{thm:universality_min_max}.  \qed

\section{Conclusions and open problems}\label{sec:conclusions}
The analysis carried over in this paper unveils several lines of investigation
that we believe are rather interesting.

It would be useful to identify the minimal 
requirement on the distribution of the couplings to have
the minimum and maximum per particle to converge to their
expected values exponentially fast.

One feature we find particularly intriguing is the relationship
between the elements of $J$ contributing to the minimum and those
contributing to the maximum of $H$ 
discussed in Conjecture~\ref{thm:blocks_features}.
We think that a rigorous understanding of the relative
size of the blocks of $J$ and the correlation between the rows belonging
to the minimizer and the maximizer of $J$ could significantly improve
our understanding of the problem.

Better understanding would, in turn, provide useful tools in order to evaluate
the heuristic algorithms used to tackle QUBO.

Finally, it would be interesting to study the problem when
the $J_{i,j}$ are not independent, especially in the case
where the \emph{interaction graph} is not the
complete graph.

\section*{Acknowledgement}
We thank two anonymous referees, whose comments contributed to significantly improve the content of the paper.
The authors are grateful to the Department of Mathematics and Physics of the University of Rome "Tre" which provided us with additional computing resources.
AT thanks PRIN 2022 PNRR: "RETINA: REmote sensing daTa INversion with multivariate functional modeling for essential climAte variables characterization" (Project Number: P20229SH29, CUP: J53D23015950001) funded by the European Union under the Italian National Recovery and Resilience Plan (NRRP) of NextGenerationEU, under the Italian Ministry of University and Research (MUR). 
BS acknowledges the support of MUR Excellence Department Project
MatMod@TOV awarded to the Department of Mathematics, University of Rome 
Tor Vergata, CUP E83C23000330006

\section*{Data Availability}
Data of Table~\ref{tab:m_N_uniform_benchmarks} is available from
\cite{wang2013probabilistic} and references therein. 
All statistics concerning the generated data during
this study are included in this article.

\section*{Compliance with Ethical Standards}
The authors declare no conflict of interest.


\begin{thebibliography}{99}
{}
\bibitem{alidaee2017simple}
B. Alidaee, H. Sloan, and H. Wang. “Simple and fast novel diversification approach for
the UBQP based on sequential improvement local search”. In: \emph{Computers \&
Industrial Engineering} 111 (2017), pp. 164–175.
{}
\bibitem{apollonio2019criticality}
V. Apollonio, R. D’Autilia, B. Scoppola, E. Scoppola, and A. Troiani. “Criticality of
measures on 2-d Ising configurations: from square to hexagonal graphs”. In:
\emph{Journal of Statistical Physics} 177.5 (2019), pp. 1009–1021.
{}
\bibitem{apollonio2022shaken}
V. Apollonio, R. D’Autilia, B. Scoppola, E. Scoppola, and A. Troiani. “Shaken dynamics:
an easy way to parallel Markov Chain Monte Carlo”. In: \emph{Journal of Statistical
Physics} 189.3 (2022), p. 39.
{}
\bibitem{boucheron2005moment}
S. Boucheron, O. Bousquet, G. Lugosi, and P. Massart. “{Moment inequalities for
functions of independent random variables}”. In: \emph{The Annals of Probability} 33.2
(2005), pp. 514–560. \textsc{doi}: \href {https://doi.org/10.1214/009117904000000856}
{\nolinkurl {10.1214/009117904000000856}}. \textsc{url}: \url
{https://doi.org/10.1214/009117904000000856}.
{}
\bibitem{boucheron2003concentration}
S. Boucheron, G. Lugosi, and P. Massart. “Concentration inequalities using the entropy
method”. In: \emph{The Annals of Probability} 31.3 (2003), pp. 1583–1614.
{}
\bibitem{carmona2006universality}
P. Carmona and Y. Hu. “Universality in Sherrington–Kirkpatrick’s spin glass model”. In:
\emph{Annales de l’Institut Henri Poincare (B) Probability and Statistics} 42.2 (2006),
pp. 215–222. \textsc{issn}: 0246-0203. \textsc{doi}: \href
{https://doi.org/https://doi.org/10.1016/j.anihpb.2005.04.001} {\nolinkurl
{https://doi.org/10.1016/j.anihpb.2005.04.001}}. \textsc{url}: \url
{https://www.sciencedirect.com/science/article/pii/S0246020305000634}.
{}
\bibitem{charbonneau2023spin}
P. Charbonneau, E. Marinari, G. Parisi, F. Ricci-tersenghi, G. Sicuro, F. Zamponi, and
M. Mezard. \emph{Spin Glass Theory and Far Beyond: Replica Symmetry Breaking after
40 Years}. World Scientific, 2023.
{}
\bibitem{chatterjee2005simple}
S. Chatterjee. “A simple invariance theorem”. In: \emph{arXiv preprint math/0508213}
(2005).
{}
\bibitem{chatterjee2006generalization}
S. Chatterjee. “{A generalization of the Lindeberg principle}”. In: \emph{The Annals of
Probability} 34.6 (2006), pp. 2061–2076. \textsc{doi}: \href
{https://doi.org/10.1214/009117906000000575} {\nolinkurl
{10.1214/009117906000000575}}. \textsc{url}: \url
{https://doi.org/10.1214/009117906000000575}.
{}
\bibitem{d2021parallel}
R. D’Autilia, L. N. Andrianaivo, and A. Troiani. “Parallel simulation of two-dimensional
Ising models using probabilistic cellular automata”. In: \emph{Journal of Statistical
Physics} 184 (2021), pp. 1–22.
{}
\bibitem{efron1981jackknife}
B. Efron and C. Stein. “The jackknife estimate of variance”. In: \emph{The Annals of
Statistics} (1981), pp. 586–596.
{}
\bibitem{moderateDeviations}
P. Eichelsbacher and M. Löwe. “Moderate Deviations for I.I.D. Random Variables”. In:
\emph{ESAIM: Probability and Statistics} 7 (2003), pp. 209–218. \textsc{doi}: \href
{https://doi.org/10.1051/ps:2003005} {\nolinkurl {10.1051/ps:2003005}}.
{}
\bibitem{erba2024statistical}
V. Erba, F. Krzakala, R. P. Ortiz, and L. Zdeborová. “Statistical mechanics of the
maximum-average submatrix problem”. In: \emph{Journal of Statistical Mechanics:
Theory and Experiment} 2024.1 (2024), p. 013403.
{}
\bibitem{fukushima2023mixing}
B. H. Fukushima-Kimura, S. Handa, K. Kamakura, Y. Kamijima, K. Kawamura, and
A. Sakai. “Mixing time and simulated annealing for the stochastic cellular automata”. In:
\emph{Journal of Statistical Physics} 190.4 (2023), p. 79.
{}
\bibitem{glover2019quantum}
F. Glover, G. Kochenberger, and Y. Du. “Quantum Bridge Analytics I: a tutorial on
formulating and using QUBO models”. In: \emph{Annals of Operations Research} 314
(2019), pp. 141–183.
{}
\bibitem{liang2022data}
R. N. Liang, E. A. Anacleto, and C. N. Meneses. “Data structures for speeding up Tabu
Search when solving sparse quadratic unconstrained binary optimization problems”. In:
\emph{Journal of Heuristics} 28.4 (2022), pp. 433–479.
{}
\bibitem{mezard1987spin}
M. Mézard, G. Parisi, and M. A. Virasoro. \emph{Spin glass theory and beyond: An
Introduction to the Replica Method and Its Applications}. Vol. 9. World Scientific
Publishing Company, 1987.
{}
\bibitem{panchenko2005generalized}
D. Panchenko. “Free energy in the generalized Sherrington–Kirkpatrick mean field
model”. In: \emph{Reviews in Mathematical Physics} 17.07 (2005), pp. 793–857.
{}
\bibitem{panchenko2012sherrington}
D. Panchenko. “The Sherrington-Kirkpatrick model: an overview”. In: \emph{Journal of
Statistical Physics} 149 (2012), pp. 362–383.
{}
\bibitem{panchenko2018mixed}
D. Panchenko. “{Free energy in the mixed $p$-spin models with vector spins}”. In:
\emph{The Annals of Probability} 46.2 (2018), pp. 865–896. \textsc{doi}: \href
{https://doi.org/10.1214/17-AOP1194} {\nolinkurl {10.1214/17-AOP1194}}.
\textsc{url}: \url {https://doi.org/10.1214/17-AOP1194}.
{}
\bibitem{rendl2010solving}
F. Rendl, G. Rinaldi, and A. Wiegele. “Solving max-cut to optimality by intersecting
semidefinite and polyhedral relaxations”. In: \emph{Mathematical Programming} 121
(2010), pp. 307–335.
{}
\bibitem{STgmf}
B. Scoppola and A. Troiani. “Gaussian Mean Field Lattice Gas”. In: \emph{Journal of
Statistical Physics} 170.6 (Mar. 2018), pp. 1161–1176. \textsc{issn}: 1572-9613.
\textsc{doi}: \href {https://doi.org/10.1007/s10955-018-1984-2} {\nolinkurl
{10.1007/s10955-018-1984-2}}. \textsc{url}: \url
{https://doi.org/10.1007/s10955-018-1984-2}.
{}
\bibitem{scoppola2022shaken3D}
B. Scoppola, A. Troiani, and M. Veglianti. “Shaken dynamics on the 3d cubic lattice”. In:
\emph{Electronic Journal of Probability} 27 (2022), pp. 1–26.
{}
\bibitem{talagrand2010mean}
M. Talagrand. \emph{Mean field models for spin glasses: Volume I: Basic examples}.
Springer Science \& Business Media, 2010. \textsc{doi}: \href
{https://doi.org/10.1007/978-3-642-15202-3} {\nolinkurl {10.1007/978-3-642-15202-3}}.
{}
\bibitem{talagrand2010mean2}
M. Talagrand. \emph{Mean field models for spin glasses: Volume II: Advanced
Replica-Symmetry and Low Temperature}. Springer Science \& Business Media, 2011.
\textsc{doi}: \href {https://doi.org/10.1007/978-3-642-22253-5} {\nolinkurl
{10.1007/978-3-642-22253-5}}.
{}
\bibitem{waidyasooriya2020gpu}
H. M. Waidyasooriya and M. Hariyama. “A GPU-based quantum annealing simulator for
fully-connected ising models utilizing spatial and temporal parallelism”. In: \emph{IEEE
Access} 8 (2020), pp. 67929–67939.
{}
\bibitem{wang2013probabilistic}
Y. Wang, Z. Lü, F. Glover, and J.-K. Hao. “Probabilistic GRASP-tabu search algorithms
for the UBQP problem”. In: \emph{Computers \& Operations Research} 40.12 (2013),
pp. 3100–3107.
\end{thebibliography}
\end{document}